\theoremstyle{definition}
\newtheorem{theorem}{Theorem}[section]
\newtheorem{corollary}[theorem]{Corollary}
\newtheorem{lemma}[theorem]{Lemma}
\newtheorem*{remark}{Remark}
\newcommand{\N}{\ensuremath{\mathbb{N}}}  % Natural numbers
\newcommand{\Z}{\ensuremath{\mathbb{Z}}}  % Integers
\newcommand{\cN}{\overline{\ensuremath{\mathbb{N}}}}
\newcommand{\R}{\ensuremath{\mathbb{R}}}  % Real numbers
\newcommand{\ma}[1]{\mbox{\ensuremath{#1}}}
\newcommand{\rel}[1]{\overset{#1}{\sim}}  % Relation
\newcommand\restr[2]{{
  \left.\kern-\nulldelimiterspace 
  #1 
  \vphantom{\big|} 
  \right|_{#2} 
  }}
\newcommand{\di}[2]{\delta(#1,#2)}
\renewcommand{\P}{\ensuremath{\mathbbm{P}}} % Probability
\newcommand{\E}{\ensuremath{\mathbbm{E}}}  % Exptectation
\newcommand{\V}{\ensuremath{\mathbbm{V}}} % Variance
\newcommand{\Cov}{\ensuremath{\mathbbm{Cov}}}
\newcommand{\X}{\ensuremath{\mathcal{X}}} % Stochastic Process X
\newcommand{\stp}[1]{\ensuremath{\mathcal{#1}}} % Stochastic Process #1
\newcommand{\given}[1]{\ensuremath{\middle\vert #1}} % Conditional Probability
\newcommand{\F}{\mathscr{F}}  % Sigma algebra for probability spaces
\newcommand{\QVar}[1]{\IP{#1}{#1}} % Quadratic Variation (notation as inner product)
\newcommand{\indep}{\perp}% Independence
\newcommand{\ds}{\hspace{3pt}ds}
\newcommand{\dif}[1]{\hspace{3pt}d#1}
\newcommand{\der}[2]{\frac{d}{d#2}#1} %Derivative
\newcommand{\pder}[2]{\frac{\partial #1}{\partial #2}} %Partial derivative
\newcommand{\bO}[1]{\mathcal{O}\(#1\)} %Big O notation
\newcommand{\lo}[1]{o\(#1\)} %Little o notation
\newcommand{\Ind}[1]{\ensuremath{\mathbbm{1}_{#1}}} % Indicator function
\renewcommand{\S}[1]{ % Suslin Set
\ifx&#1&
\ensuremath{\mathcal{S}}
\else
\ensuremath{\mathcal{S}}\(#1\)
\fi
}
\newcommand{\Bo}[1]{  % Borel sigma algebra
\ifx&#1&
\ensuremath{\mathscr{B}}
\else
\ensuremath{\mathscr{B}}(#1)
\fi
}
\renewcommand{\L}[1]{\mathcal{L}^{#1}} % Lp spaces
\newcommand{\cl}[1]{\overline{#1}} % Closure
\newcommand{\Tau}{\ensuremath{\mathcal{T}}} % Topology Tau
\newcommand{\HS}{\mathsf{H}} % Hilbert Space H
\DeclarePairedDelimiterX{\IP}[2]{\langle}{\rangle}{#1, #2} % Inner product operator
\newcommand{\norm}[1]{\left\lVert #1 \right\rVert} % Norm
\newcommand{\CC}[1]{\overline{#1}} % Complex Conjugation
\newcommand{\Var}[1]{\ensuremath{Var\(#1\)}}
\newcommand{\Rho}{\ensuremath{\mathcal{P}}} % \Rho, random element in PS_[0,1]
\newcommand{\PS}[1]{\mathscr{P}_{#1}} % Partitions on #1
\newcommand{\PSC}[2]{\PS{#1}(#2)}
\newcommand{\AF}[1]{\mbox{\ensuremath{\lvert #1 \rvert}}}  % Asymptotic frequency
\newcommand{\DAF}[1]{\mbox{\ensuremath{{\lvert #1\rvert}^{\downarrow}}}} % Decreasing asymptotic frequencies
\newcommand{\DAFi}[2]{\mbox{\ensuremath{{\lvert #1\rvert}^{\downarrow}_{#2}}}} % ith element of DAF
\newcommand{\LAF}[1]{\mbox{\ensuremath{\lvert #1 \rvert}}} % Asymptotic frequencies ordered by least elements
\newcommand{\LAFi}[2]{\mbox{\ensuremath{{\lvert #1 \rvert}_{#2}}}} % ith element of LAF
\newcommand{\s}[1]{\sigma(#1)} % Permutation
\newcommand{\si}[1]{\sigma^{-1}(#1)} % Inverse permutation
\newcommand{\dis}[2]{\delta_m(#1,#2)} % distance function
\newcommand{\Pio}{\Pi^{\downarrow}} % Ordered partition
\newcommand{\pio}{\pi^{\downarrow}}
\newcommand{\PSo}[1]{\mathscr{P}^{\downarrow}_{#1}}
\DeclareMathOperator{\co}{Coag} % Coagulation operator (single)
\DeclareMathOperator*{\Co}{CO}  % Multiple Coagulation operator
\newcommand{\tPi}{\widetilde{\Pi}}
\newcommand{\tpi}{\tilde{\pi}}
\newcommand{\mPi}{\mathbb{\Pi}}
\newcommand{\ze}{\mathbf{0}}
\renewcommand{\(}{\left(}
\renewcommand{\)}{\right)}
\renewcommand{\[}{\left[}
\renewcommand{\]}{\right]}
\newcommand{\lbr}{\left\{} % left brace
\newcommand{\rbr}{\right\}} % right brace
\newcommand{\abs}[1]{\left\lvert #1 \right\rvert}
\title{Site Frequency Spectrum of the Bolthausen-Sznitman Coalescent}
\author[1]{G{\"o}tz Kersting}
\author[2]{Arno Siri-J\'egousse} 
\author[2]{Alejandro H. Wences}
\date{}%
\affil[1]{Goethe Universit{\"a}t, Institut f{\"u}r Mathematik, Frankfurt am Main, Germany.} 
\affil[2]{UNAM, IIMAS, Departamento de Probabilidad y Estad\'istica, Mexico.}
\begin{document}
\maketitle
%\tableofcontents
\abstract{We derive explicit formulas for the two first moments of he site frequency spectrum $(SFS_{n,b})_{1\leq b\leq n-1}$ of the Bolthausen-Sznitman coalescent along with some precise and efficient approximations, even for small sample sizes $n$.
These results provide new $L_2$-asymptotics for some values of $b=o(n)$.
We also study the length of internal branches carrying $b>n/2$ individuals. In this case we obtain the distribution function and a convergence in law.
Our results rely on the random recursive tree construction of the Bolthausen-Sznitman coalescent.}

%----------------------------------------------------------------------%
%------------------------- Introduction -------------------------------%
%----------------------------------------------------------------------%
\section{Introduction}
The Bolthausen-Sznitman coalescent is an exchangeable coalescent with multiple collisions that has recently gained attention
in the theoretical population genetics literature. It has been described as the limit process of the genealogies of different 
population evolution models, including models that contemplate the effect of natural selection $\cite{Sch03,Sch16}$. 
It has also been proposed as a new null model for the genealogies of rapidly adapting populations, such as pathogen microbial populations,
and other populations that show departures from Kingman's null model \cite{Des,NH13}.\par

A measure of the genetic diversity in a present day sample of a population is often used in population genetics in order to infer
its evolutionary past and the forces at play in its dynamics. The \textit{Site Frequency Spectrum} (SFS) is a well known 
theoretical model of the genetic diversity present in a population, it assumes that neutral mutations arrive to the population as a 
Poisson Process and that each arriving mutation falls in a different site of the genome (infinite sites model), in contrast to the 
\textit{Allele Frequency Spectrum} in which mutations are assumed to fall on the same site but create a new allele every time 
(infinite alleles model). Given the close relation between the Site Frequency Spectrum and the whole structure of the 
underlying genealogical tree, it can be used as a model selection tool for the evolutionary dynamics of a population \cite{EBBF, Kor, FS}.\par

In this work we give explicit expressions of the first and second moments for the whole
Site Frequency Spectrum $(SFS_{n,b})_{1\leq b<n}$ of the Bolthausen-Sznitman coalescent, which to our knowledge were only known for 
Kingman's coalescent until
now $\cite{Fu95}$. Here $SFS_{n,b}$ denotes the number of mutations shared by $b$ individuals in the
sample of size $n$. For the expectation we obtain the formula
\begin{equation*}
\E\[SFS_{n,b}\] = \theta n \int_0^1 \frac{\Gamma(b-p)}{\Gamma(b+1)} \frac{\Gamma(n-b+p)}{\Gamma(n-b+1)}
\frac{\dif{p}}{\Gamma(1-p)\Gamma(1+p)},
\end{equation*}
where $\theta$ denotes the mutation rate. For larger values of $n$ there might occur problems in the
calculation of this integral due to the exorbitant growth of the Gamma function. Also this formula
allows no insight into the shape of the expected site frequency spectrum. For this purpose
approximations are helpful. A first approximation, resting on Stirling's formula, reads for $2\leq b \leq n-1$

\begin{equation}
\E[SFS_{n,b}] \approx \frac{\theta}{n-1} \frac{b-1}{b} f_1\(\frac{b-1}{n-1}\)
\label{approx}
\end{equation}
where $f_1$ is a convex, non-monotone function on $(0,1)$
defined by
\begin{equation} 
\label{eq:f1}
f_1(u)\coloneqq \int_0^1 u^{-p-1}(1-u)^{p-1} \frac{\sin (\pi p)}{\pi p} \dif{p} \hspace{5pt}.
\end{equation}
We remark that this integral may be reduced to the (complex) exponential integral $Ei(\cdot)$. These formulas show that the shape of the Site Frequency Spectrum, restricted to the range
$2 \leq  b < n$, is explained essentially by one function not depending on the population size $n$. Also our approximations update those
given in $\cite{NH13}$ for the case of families with frequencies close to 0 and 1, since we have
$f_1(u){\sim} ({u\log u})^{-2}$ close to 0 and $f_1(u){\sim} ({(u-1)\log(1-u)})^{-1}$ close to 1, see equations \eqref{f1at0} and \eqref{f1at1} below. The case $b = 1$ is not covered by \eqref{approx}, 
it has to be treated separately, which reflects the dominance of external branches in the Bolthausen-Sznitman coalescent. See Theorem $\ref{cor:ASMEV_SFSI}$ for a complete summary. \par

\begin{figure}[h]
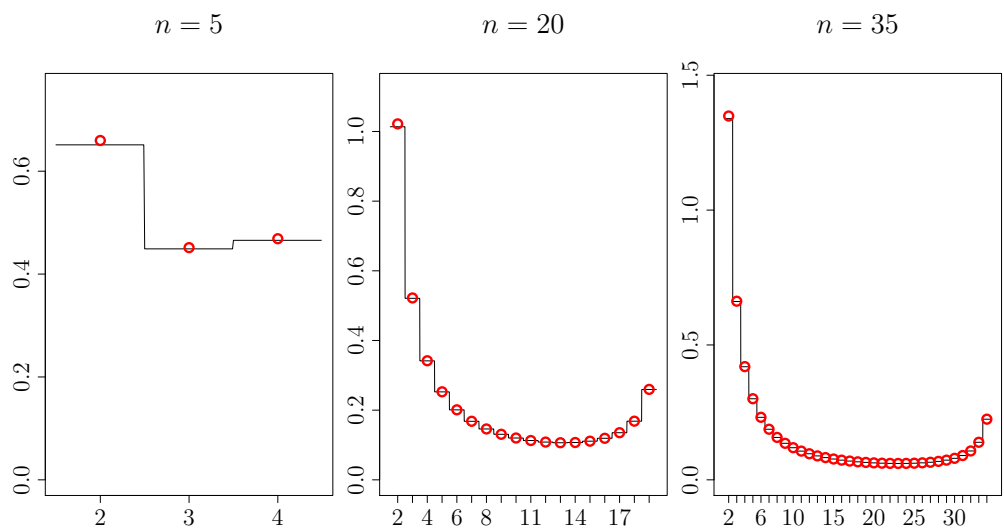

%\hspace*{1.8cm}
%\hspace{-60pt}
\centering
\scalebox{0.75}{
% [inline block 0: 1 envs, 128066 chars -> data_tex | \begin{tikzpicture}[x=1pt,y=1pt] \definecolor{fillColor}{RGB}{255,255,255}...]

}
\caption{Comparison of exact and approximated values of $\E[SFS_{n,b}]$, red circles present the exact values for $b=2$ to $n-1$,
and the black lines their refined approximations \eqref{approx2}.} %(note that $\E[SFS_{5,1}] \simeq  2.86$, $\E[SFS_{20,1}] \simeq 6.87$ and
%$\E[SFS_{35,1}] \simeq 10.3$).}
\label{fig:ESFScompIntro}
\end{figure}\par

The above approximation is accurate also from a numerical point of view. Only for $b = 2$
we encounter an enlarged relative error which anyhow remains less than 10 percent for $n \geq 8$. If a more precise result is desired then 
the following refined approximation may be applied for $2\leq b\leq n$:
\begin{equation}\label{approx2}
\E\[SFS_{n,b}\] \approx \theta n\frac{b-1}{b} \( \frac{1}{(n-1)^2}f_1\(\frac{b-1}{n-1}\) - \frac{1}{(n-1)^3} g_1\(\frac{b-1}{n-1}\) \),
\end{equation}
with a positive function $g_1$ on $(0,1)$ given by
\begin{equation}\label{eq:g1}
g_1(u)\coloneqq \frac{1}{2u^2(1-u)^2} \frac{\pi^2 + \log^2 \frac{1-u}{u} + \frac{2}{u} \log \frac{1-u}{u}}{\(\pi^2 + \log^2 \frac{1-u}{u}\)^2}.
\end{equation}
With this formula we have a relative error remaining below 1 percent for $b = 2$ and $n \geq 10$,
below 0.5 percent for $b = 2$ and $n\geq 150$, and below 0.3 percent for $b \geq 3$ and $n \geq 10$. Thus this
approximation appears well-suited for practical purpose. Figure $\ref{fig:ESFScompIntro}$ illustrates its precision in
the cases $n = 5$, $20$, $35$ and $\theta = 1$.\par

For $b=1$ the approximation formula corresponding to \eqref{approx} reads
\begin{align*}
\E\[SFS_{n,1}\] &= \theta n \int_0^1  \frac{\Gamma(n-1+p)}{\Gamma(n)}
\frac{\dif{p}}{\Gamma(1+p)} \\&\approx \theta n \int_0^1  (n-1)^{p-1}
\frac{\dif{p}}{\Gamma(1+p)},
\end{align*}
which is an immediate consequence of Stirling's approximation. It is precise for small $n$ and requires no further correction as in the case $b\ge 2$.\par

We also study the asymptotic behavior of the second moments which, together with the above asymptotics for the first moment, leads to the 
following $L^2$ convergences:
\begin{equation*}
\frac{\log n}{n} SFS_{n,1} \to \theta,
\end{equation*}
and, whenever $b\geq 2$ and $b=\lo{\sqrt{n}/\log n}$,
\begin{equation*}
\frac{b(b-1)\log^2\(n/b\)}{n} SFS_{n,b} \to \theta.
\end{equation*}
These generalize and strengthen the results in $\cite{DK}$ for the Bolthausen-Sznitman coalescent.\par

We also provide the joint distribution function of the branch lengths of large families, 
i.e families of size at least half the total population size,
and their marginal distribution function. These results are useful to obtain the marginal distribution function of the Site Frequency Spectrum and a sampling formula for the half of the vector 
corresponding to large family sizes, although we do not present such tedious
computations here.\par 

Asymptotic results for related functionals on the Bolthausen-Sznitman coalescent have been derived by studying the block count chain of the coalescent
through a coupling with a random walk as in $\cite{DIMR07}$ and $\cite{KPSJ14}$, where asymptotics for the total number of jumps, 
and the total, internal, and external branch lengths of the Bolthausen-Sznitman coalescent are described; these results give the
asymptotic behaviour of the total number of mutations present in the population, the number of mutations present in a single individual,
and the number of mutations present in at least 2 individuals. Also, a
Markov chain approximation of the initial steps of the process was developed in $\cite{DK}$ 
where asymptotics for the total tree length and the Site Frequency Spectrum of small families were derived 
for a class of $\Lambda$-coalescents containing the Bolthausen-Sznitman coalescnet. \par

Progress has also been made for the finite coalescent even for the general coalescent process. The finite Bolthausen-Sznitman coalescent 
has been studied through 
the spectral decomposition of its jump rate matrix described
in $\cite{KuklaPitters15}$ where the authors used it to derive explicit expressions for the transition probabilities and the Green's 
matrix of this coalescent, and also the Kingman 
coalescent. The spectral decomposition of the jump rate matrix of a general coalescent, including coalescents with multiple mergers, is also used in 
$\cite{SKS16}$ where an expression for the expected Site Frequency Spectrum is given in terms of matrix operations which in the case of 
the Bolthausen-Sznitman coalescent result in an algorithm requiring on the order of $n^2$ computations. In $\cite{HJB19}$ another expression 
in terms of matrix operations is given for this and other functionals on general coalescent processes, 
both in expected value (and higher moments) and in distribution; these expressions however are deduced from the theory of phase-type distributions, 
in particular distributions of rewards constructed on top of coalescent processes, and also require vast computations for large population sizes. \par

Our method, mainly based on the Random Recursive Tree
construction of the Bolthausen-Sznitman coalescent given in $\cite{GM05}$, gives easy-to-compute expressions for the first and second
moments of the Site Frequency Spectrum of this particular coalescent. This combinatorial construction not only allows us to 
study the bottom but also the top of the tree thus providing an additional insight into the past of the population 
and large families, both asymptotically and for any fixed population size. \par

In Section $\ref{sec:Preliminaries}$ we layout the basic intuitions that compose the bulk of our method, including the Random Recursive Tree
construction of the Bolthausen-Sznitman coalescent and the derivation of the first moment of the Site Frequency Spectrum for the infinite 
coalescent as a first application (Corollary $\ref{cor:EVSFSinfty}$). In Section $\ref{sec:BLmoments}$ we present our results on the first and
second moments of the branch lengths (Theorem $\ref{th:Elnb}$) and of the Site Frequency Spectrum (Corollary $\ref{cor:SFS_EvCov}$) for any fixed family size and initial population. We then
use these expressions to obtain asymptotic approximations of these moments as the initial population goes to infinity (Theorems $\ref{cor:ASMEV_SFSI}$ and $\ref{th:AsmpCov}$) which lead to $L^2$ convergence results on the SFS (Corollary $\ref{lawlargenumbers}$). 
In Section $\ref{sec:BLdistribution}$ we restrict ourselves to the case of large family sizes and present the joint and marginal distribution functions of their branch lengths (Theorems $\ref{Prop:Llnb}$ and $\ref{le:JointLlnb}$), 
along with a limit in law result (Corollary $\ref{cor:ConvDistlnb}$). Section \ref{sec:approximations} provides explanations for approximations \eqref{approx} and \eqref{approx2}.
Finally, in Sections
$\ref{sec:proofsBLmoments}$ and \ref{sec:proofsBLdistribution} we provide detailed proofs of our results.

%----------------------------------------------------------------------%
%------------------------- Preliminaries ------------------------------%
%----------------------------------------------------------------------%

\section{Preliminaries} \label{sec:Preliminaries}

Consider the Bolthausen-Sznitman coalescent $(\Pi^\infty(t))_{t\geq0}$ with values in $\PS{\infty}$, the space of partitions of $\N$,
and the ranked coalescent $(\DAF{\Pi^\infty(t)})_{t\geq 0}$, with values in the space of mass partitions $\PS{[0,1]}$, made of the asymptotic frequencies of $\Pi^\infty(t)$ reordered in a non-increasing way.
In what follows we present the Random Recursive Tree (RRT) construction of the Bolthausen-Sznitman coalescent given
by Goldschmidt and Martin in \cite{GM05}; then we follow the argument given in the same paper to establish that
\begin{equation}\label{PDlaw}\DAF{\Pi^\infty(t)}\overset{d}{=}PD(e^{-t},0),\end{equation}
where $PD(\alpha,\theta)$ is the $(\alpha,\theta)-$Poisson-Dirichlet distribution.
\par

%------------------------- RRT construction
Briefly, the construction of the Bolthausen-Sznitman coalescent in terms of Random Recursive Trees proceeds as follows. 
We work on the set of recursive trees whose 
labeled nodes form a partition $\pi$ of $[n]\coloneqq \{1,\dots, n\}$, where the ordering of the nodes that confers the term ``recursive''
is given by ordering the blocks of $\pi$ according to their least elements. A cutting-merge procedure is defined
on the set of recursive trees of this form with a marked edge, this procedure consists of cutting the marked 
edge and merging all the labels in the subtree below with the node above, thus creating a new recursive tree whose labels
form a new (coarser) partition of $[n]$ (see Figure $\ref{fig:CuttingMerge}$). With this operation in mind 
we consider a RRT with labels $\{1\},\cdots,\{n\}$, 
say $T$, to which we also attach
independent standard exponential variables to each edge. Then, for each time $t>0$ 
we retrieve the partition of $[n]$ obtained 
by performing a cutting-merge procedure on all the edges of $T$ whose exponential variable is less than $t$. This
gives a stochastic process $(\Pi^n(t))_{t\geq 0}$ with values on the set of partitions of $[n]$ that can be proven to
be the $n$-Bolthausen-Sznitman coalescent. 

\begin{figure}[h]
\centering
%\hspace*{1.8cm}
\scalebox{0.8}{
\begin{tikzpicture}[x=1pt,y=1pt]
\definecolor{fillColor}{RGB}{255,255,255}
\path[use as bounding box,fill=fillColor,fill opacity=0.00] (0,0) rectangle (469.75,238.49);
\begin{scope}
\path[clip] (  0.00,  0.00) rectangle (469.75,238.49);
\definecolor{drawColor}{RGB}{0,0,0}

\node[text=drawColor,anchor=base,inner sep=0pt, outer sep=0pt, scale=  1.00] at (117.44,223.12) {\{1,3\}};

\node[text=drawColor,anchor=base,inner sep=0pt, outer sep=0pt, scale=  1.00] at ( 82.21,151.58) {\{4\}};

\node[text=drawColor,anchor=base,inner sep=0pt, outer sep=0pt, scale=  1.00] at (223.13,151.58) {\{2\}};

\node[text=drawColor,anchor=base,inner sep=0pt, outer sep=0pt, scale=  1.00] at ( 11.74, 80.03) {\{5,7\}};

\node[text=drawColor,anchor=base,inner sep=0pt, outer sep=0pt, scale=  1.00] at ( 82.21, 80.03) {\{6\}};

\node[text=drawColor,anchor=base,inner sep=0pt, outer sep=0pt, scale=  1.00] at (152.67, 80.03) {\{9\}};

\node[text=drawColor,anchor=base,inner sep=0pt, outer sep=0pt, scale=  1.00] at ( 82.21,  8.48) {\{8,10\}};

\path[draw=drawColor,line width= 0.8pt,line join=round,line cap=round] (117.44,215.83) --
	( 82.21,165.75);

\path[draw=drawColor,line width= 0.8pt,line join=round,line cap=round] (117.44,215.83) --
	(223.13,165.75);

\path[draw=drawColor,line width= 0.8pt,line join=round,line cap=round] ( 82.21,144.29) --
	( 11.74, 94.20);

\path[draw=drawColor,line width= 0.8pt,dash pattern=on 4pt off 4pt ,line join=round,line cap=round] ( 82.21,144.29) --
	( 82.21, 94.20);

\path[draw=drawColor,line width= 0.8pt,line join=round,line cap=round] ( 82.21,144.29) --
	(152.67, 94.20);

\path[draw=drawColor,line width= 0.8pt,line join=round,line cap=round] ( 82.21, 72.74) --
	( 82.21, 22.66);
\end{scope}
\begin{scope}
\path[clip] (  0.00,  0.00) rectangle (469.75,238.49);
\definecolor{drawColor}{RGB}{0,0,0}

\node[text=drawColor,anchor=base,inner sep=0pt, outer sep=0pt, scale=  1.00] at (352.32,223.12) {\{1,3\}};

\node[text=drawColor,anchor=base,inner sep=0pt, outer sep=0pt, scale=  1.00] at (317.08,151.58) {\{4,6,8,10\}};

\node[text=drawColor,anchor=base,inner sep=0pt, outer sep=0pt, scale=  1.00] at (458.01,151.58) {\{2\}};

\node[text=drawColor,anchor=base,inner sep=0pt, outer sep=0pt, scale=  1.00] at (246.62, 80.03) {\{5,7\}};

\node[text=drawColor,anchor=base,inner sep=0pt, outer sep=0pt, scale=  1.00] at (387.55, 80.03) {\{9\}};

\path[draw=drawColor,line width= 0.8pt,line join=round,line cap=round] (352.32,215.83) --
	(317.08,165.75);

\path[draw=drawColor,line width= 0.8pt,line join=round,line cap=round] (352.32,215.83) --
	(458.01,165.75);

\path[draw=drawColor,line width= 0.8pt,line join=round,line cap=round] (317.08,144.29) --
	(246.62, 94.20);

\path[draw=drawColor,line width= 0.8pt,line join=round,line cap=round] (317.08,144.29) --
	(387.55, 94.20);
\end{scope}
\end{tikzpicture}
}
\caption{On the left, an example of a recursive tree whose labels constitute a partition of $\{1,\cdots,10\}$.
On the right, the resulting recursive tree after a cutting-merge procedure performed on the marked edge (dashed line) of the
first tree.}
\label{fig:CuttingMerge}
\end{figure}
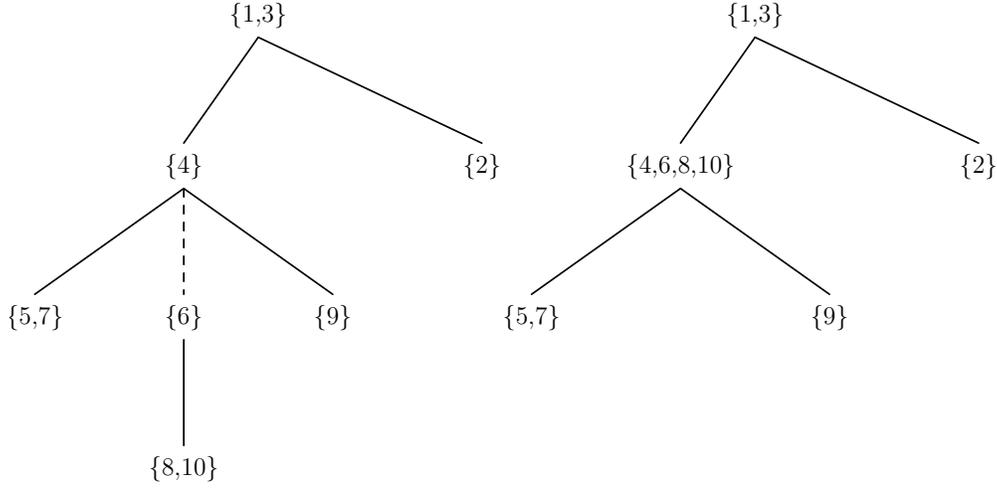\par

%------------------------- PD distribution
The fact that $\DAF{\Pi^\infty(t)}\overset{d}{=}PD(e^{-t},0)$ now follows readily. To see this, consider
the construction of $T$ where nodes arrive sequentially and each arriving node attaches to any of the previous nodes 
with equal probability. Considering also their exponential edges and having in mind
the cutting-merge procedure we see that for any fixed time $t$, and assuming that $b-1$ nodes have arrived and
formed $k$ blocks of sizes $s_1,\dots,s_k$ in $\Pi^{b-1}(t)$, 
the next arriving node, node $\{b\}$, will form a new block in $\Pi^b(t)$
if and only if it attaches to any of the roots of the sub-trees of $T$ that form the said $k$ blocks  and if, 
furthermore,
its exponential edge is greater than $t$; this occurs with probability $\frac{ke^{-t}}{b-1}$. On the other hand,
in order for $\{b\}$ to join the $jth$ block of size $s_j$ it must either attach to the root of the sub-tree 
of $T$ that builds 
this block and its exponential edge must be less than $t$, which happens with probability $\frac{1-e^{-t}}{b-1}$, or
it must attach to any other node of the said sub-tree, which happens with probability $\frac{s_j-1}{b-1}$; thus, the
probability of attaching to the $jth$ block is $\frac{s_j-e^{-t}}{b-1}$. 
We recognize in these expressions the probabilities
that define the Chinese Restaurant Process with parameters $\alpha=e^{-t}$ and $\theta=0$.

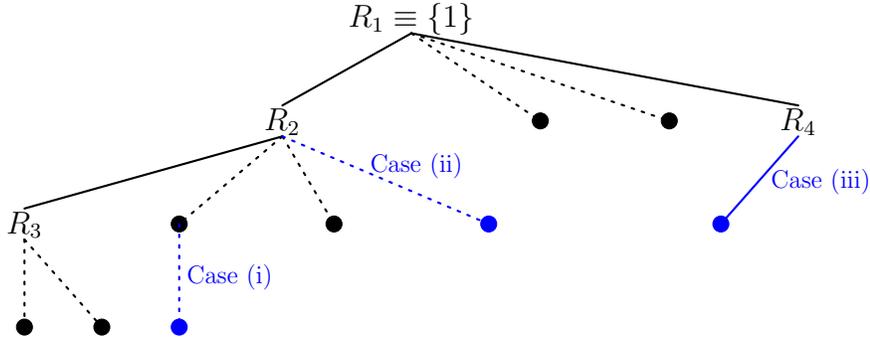
\begin{figure}[h]
\centering
\hspace*{-1.8cm}
\begin{tikzpicture}[x=1pt,y=1pt]
\definecolor{fillColor}{RGB}{255,255,255}
\path[use as bounding box,fill=fillColor,fill opacity=0.00] (0,0) rectangle (361.35,144.54);
\begin{scope}
\path[clip] (  0.00,  0.00) rectangle (361.35,144.54);
\definecolor{drawColor}{RGB}{0,0,0}

\node[text=drawColor,anchor=base,inner sep=0pt, outer sep=0pt, scale=  1.00] at (180.68,127.37) {$R_1\equiv\{1\}$};

\node[text=drawColor,anchor=base,inner sep=0pt, outer sep=0pt, scale=  1.00] at (131.89, 88.34) {$R_2$};

\node[text=drawColor,anchor=base,inner sep=0pt, outer sep=0pt, scale=  1.00] at (327.02, 88.34) {$R_4$};
\definecolor{fillColor}{RGB}{0,0,0}

\path[draw=drawColor,line width= 0.4pt,line join=round,line cap=round,fill=fillColor] (229.46, 91.78) circle (  3.01);

\path[draw=drawColor,line width= 0.4pt,line join=round,line cap=round,fill=fillColor] (278.24, 91.78) circle (  3.01);

\node[text=drawColor,anchor=base,inner sep=0pt, outer sep=0pt, scale=  1.00] at ( 34.33, 49.31) {$R_3$};

\path[draw=drawColor,line width= 0.4pt,line join=round,line cap=round,fill=fillColor] ( 92.87, 52.76) circle (  3.01);

\path[draw=drawColor,line width= 0.4pt,line join=round,line cap=round,fill=fillColor] (151.41, 52.76) circle (  3.01);
\definecolor{drawColor}{RGB}{0,0,255}
\definecolor{fillColor}{RGB}{0,0,255}

\path[draw=drawColor,line width= 0.4pt,line join=round,line cap=round,fill=fillColor] (209.94, 52.76) circle (  3.01);

\path[draw=drawColor,line width= 0.4pt,line join=round,line cap=round,fill=fillColor] (297.75, 52.76) circle (  3.01);

\path[draw=drawColor,line width= 0.4pt,line join=round,line cap=round,fill=fillColor] ( 92.87, 13.73) circle (  3.01);
\definecolor{drawColor}{RGB}{0,0,0}
\definecolor{fillColor}{RGB}{0,0,0}

\path[draw=drawColor,line width= 0.4pt,line join=round,line cap=round,fill=fillColor] ( 34.33, 13.73) circle (  3.01);

\path[draw=drawColor,line width= 0.4pt,line join=round,line cap=round,fill=fillColor] ( 63.60, 13.73) circle (  3.01);

\path[draw=drawColor,line width= 0.8pt,line join=round,line cap=round] (180.68,124.95) --
	(131.89, 97.64);

\path[draw=drawColor,line width= 0.8pt,line join=round,line cap=round] (180.68,124.95) --
	(327.02, 97.64);

\path[draw=drawColor,line width= 0.8pt,dash pattern=on 1pt off 3pt ,line join=round,line cap=round] (180.68,124.95) --
	(229.46, 91.78);

\path[draw=drawColor,line width= 0.8pt,dash pattern=on 1pt off 3pt ,line join=round,line cap=round] (180.68,124.95) --
	(278.24, 91.78);

\path[draw=drawColor,line width= 0.8pt,line join=round,line cap=round] (131.89, 85.93) --
	( 34.33, 58.61);

\path[draw=drawColor,line width= 0.8pt,dash pattern=on 1pt off 3pt ,line join=round,line cap=round] (131.89, 85.93) --
	( 92.87, 52.76);

\path[draw=drawColor,line width= 0.8pt,dash pattern=on 1pt off 3pt ,line join=round,line cap=round] (131.89, 85.93) --
	(151.41, 52.76);
\definecolor{drawColor}{RGB}{0,0,255}

\path[draw=drawColor,line width= 0.8pt,dash pattern=on 1pt off 3pt ,line join=round,line cap=round] (131.89, 85.93) --
	(209.94, 52.76);

\node[text=drawColor,anchor=base,inner sep=0pt, outer sep=0pt, scale=  0.80] at (182.63, 72.44) {Case (ii)};

\path[draw=drawColor,line width= 0.8pt,line join=round,line cap=round] (327.02, 85.93) --
	(297.75, 52.76);

\node[text=drawColor,anchor=base west,inner sep=0pt, outer sep=0pt, scale=  0.80] at (316.78, 66.59) {Case (iii)};

\path[draw=drawColor,line width= 0.8pt,dash pattern=on 1pt off 3pt ,line join=round,line cap=round] ( 92.87, 52.76) --
	( 92.87, 13.73);

\node[text=drawColor,anchor=base west,inner sep=0pt, outer sep=0pt, scale=  0.80] at ( 95.79, 30.49) {Case (i)};
\definecolor{drawColor}{RGB}{0,0,0}

\path[draw=drawColor,line width= 0.8pt,dash pattern=on 1pt off 3pt ,line join=round,line cap=round] ( 34.33, 46.90) --
	( 34.33, 13.73);

\path[draw=drawColor,line width= 0.8pt,dash pattern=on 1pt off 3pt ,line join=round,line cap=round] ( 34.33, 46.90) --
	( 63.60, 13.73);
\end{scope}
\end{tikzpicture}
\caption{Schematic representation of passing from $\Pi^n(t)$ to $\Pi^{n+1}(t)$ for fixed $t$, by adding a new node (blue) to a RRT. 
Solid lines and dotted lines represent edges whose exponential variables are greater than $t$ and less than or equal to $t$, respectively. 
In this case at time $t$ there are four subtrees rooted at $R_1,R_2,R_3,$ and $R_4$, which form the blocks that 
constitute $\Pi^n(t)$; these blocks are also the tables of a Chinese Restaurant Process. In case (i) the new node will be included
in the block formed by $R_2$ at time $t$, irrespective of whether its exponential edge is greater than $t$ or not. In case (ii) the new
node forms part of the block rooted at $R_4$ because its exponential edge is less than $t$. Finally, in case (iii) the new
node is a new root of a subtree that will form an additional block of $\Pi^{n+1}(t)$ (i.e. the new node opens a new table in the
Chinese Restaurant Process).}
\label{fig:RRT-BSC}
\end{figure}\par

We now provide two straightforward applications of the RRT construction described above which nonetheless contain
the essential intuitions underlying the forthcoming proofs.

\subsection{Site Frequency Spectrum in the infinite coalescent}
For the first application consider a subset $I\subset(0,1)$ and define $(C_I(t))_{t\geq0}$ to be the process of the number
of blocks in $\Pi^\infty(t)$ with asymptotic frequencies in $I$. Then
\begin{equation}\label{def:ell_I}
	\ell_I\coloneqq\int_0^\infty C_I(t) \dif{t}
\end{equation}
gives the total branch length of families with size frequencies in $I$ in the infinite coalescent.
\par
Our first theorem is a simple corollary of the equality in law \eqref{PDlaw}.
\begin{theorem}\label{th:EVSFSinfty}
For $I\subset (0,1)$, we have 
\begin{equation*}
\E[\ell_{I}]=\int_I \int_0^1u^{-p-1}(1-u)^{p-1}\frac{\sin(\pi p)}{\pi p} \dif{p} \dif{u}.
\end{equation*}
\end{theorem}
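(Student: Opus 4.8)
The plan is to reduce $\E[\ell_I]$ to the intensity measure of the Poisson--Dirichlet law, and then to compute that intensity explicitly from the stable subordinator representation behind \eqref{PDlaw}. Since $C_I(t)\geq 0$, Tonelli's theorem gives $\E[\ell_I]=\int_0^\infty \E[C_I(t)]\dif{t}$ in $[0,\infty]$, so no integrability is assumed. By \eqref{PDlaw} the ranked asymptotic frequencies $\DAF{\Pi^\infty(t)}$ have law $PD(e^{-t},0)$, whose atoms sum to $1$ a.s.; hence each block of $\Pi^\infty(t)$ corresponds to an atom, and $\E[C_I(t)]$ is the mean number of atoms of a $PD(e^{-t},0)$ mass partition lying in $I$. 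Writing $\rho_\alpha$ for the density on $(0,1)$ of the mean measure $\E\big[\sum_i\delta_{P_i}\big]$ of $(P_i)\sim PD(\alpha,0)$, this reads $\E[C_I(t)]=\int_I\rho_{e^{-t}}(u)\dif{u}$.

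The heart of the argument is to identify $\rho_\alpha$. Here I would use that $PD(\alpha,0)$ is the law of the ranked, normalized jumps of an $\alpha$-stable subordinator on $[0,1]$: the unranked jumps form a Poisson point process on $(0,\infty)$ with intensity $\Lambda(dx)=\tfrac{\alpha}{\Gamma(1-\alpha)}x^{-\alpha-1}dx$, the total $\Sigma=\sigma_1$ satisfies $\E[e^{-\lambda\sigma_1}]=e^{-\lambda^\alpha}$, and $P_i=\Delta_i/\Sigma$. Applying the Mecke (Palm) formula to $f(x,\eta)=\Ind{\{x/M(\eta)\in I\}}$, with $M(\eta)$ the total mass of the configuration $\eta$, yields
$$\E\Big[\sum_i\Ind{\{P_i\in I\}}\Big]=\int_0^\infty\P\Big(\frac{x}{x+\Sigma}\in I\Big)\,\Lambda(dx),$$
where on the right $\Sigma$ is independent of $x$. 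For fixed $\Sigma$, the substitution $u=x/(x+\Sigma)$ converts $\Lambda$ into the density $\tfrac{\alpha}{\Gamma(1-\alpha)}u^{-\alpha-1}(1-u)^{\alpha-1}\Sigma^{-\alpha}$ in $u$, so after taking the expectation over $\Sigma$ everything reduces to the single scalar $\E[\Sigma^{-\alpha}]$.

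To finish this step I would compute the negative moments via $\sigma_1^{-s}=\tfrac{1}{\Gamma(s)}\int_0^\infty\lambda^{s-1}e^{-\lambda\sigma_1}\,d\lambda$ and the Laplace transform, giving $\E[\sigma_1^{-s}]=\Gamma(1+s/\alpha)/\Gamma(1+s)$, hence $\E[\Sigma^{-\alpha}]=\Gamma(2)/\Gamma(1+\alpha)=1/\Gamma(1+\alpha)$ at $s=\alpha$. Using $\Gamma(1+\alpha)=\alpha\Gamma(\alpha)$ and the reflection formula $\Gamma(\alpha)\Gamma(1-\alpha)=\pi/\sin(\pi\alpha)$, the prefactor collapses and I obtain
$$\rho_\alpha(u)=\frac{\sin(\pi\alpha)}{\pi}\,u^{-\alpha-1}(1-u)^{\alpha-1}.$$
A useful consistency check is $\int_0^1 u\,\rho_\alpha(u)\dif{u}=\tfrac{\sin(\pi\alpha)}{\pi}B(1-\alpha,\alpha)=1$, matching the expected total mass.

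It then remains only to integrate in $t$: inserting $\rho_{e^{-t}}$ into $\E[\ell_I]=\int_0^\infty\int_I\rho_{e^{-t}}(u)\dif{u}\dif{t}$, exchanging the order of integration (again by Tonelli), and substituting $p=e^{-t}$ (so $\dif{t}=\dif{p}/p$, with $t:0\to\infty$ corresponding to $p:1\to 0$) reproduces exactly
$$\E[\ell_I]=\int_I\int_0^1 u^{-p-1}(1-u)^{p-1}\frac{\sin(\pi p)}{\pi p}\dif{p}\dif{u}.$$
The main obstacle is the middle step, namely establishing the $PD(\alpha,0)$ intensity $\rho_\alpha$: justifying the Mecke reduction and evaluating $\E[\Sigma^{-\alpha}]$. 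Everything before and after that is routine bookkeeping, which is why the statement is announced as a simple corollary of \eqref{PDlaw}.
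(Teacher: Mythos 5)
Your proposal is correct: every step checks out, including the constants (your Lévy-measure normalization $\tfrac{\alpha}{\Gamma(1-\alpha)}x^{-\alpha-1}dx$ is indeed the one giving $\E[e^{-\lambda\sigma_1}]=e^{-\lambda^\alpha}$, the negative-moment formula $\E[\sigma_1^{-s}]=\Gamma(1+s/\alpha)/\Gamma(1+s)$ is right, and the prefactor collapses via reflection exactly as you say). The outer skeleton is the same as the paper's — Tonelli to reduce to $\int_0^\infty\E[C_I(t)]\dif t$, identification of $\E[C_I(t)]$ with the $PD(e^{-t},0)$ intensity on $I$, then the substitution $p=e^{-t}$ — but you handle the key middle step differently. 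The paper simply quotes the Pitman--Yor intensity formula \eqref{Le:EVPD} from \cite{PY77} (for general $(\alpha,\theta)$) and specializes it to $f=\Ind{I}$, $\theta=0$, $\alpha=e^{-t}$; you instead \emph{prove} the $\theta=0$ case from scratch, using the stable-subordinator representation of $PD(\alpha,0)$, the Mecke formula to size-bias out one jump, and an explicit computation of $\E[\Sigma^{-\alpha}]=1/\Gamma(1+\alpha)$. The paper's route is shorter and defers the probabilistic content to a reference; yours is self-contained, makes the mechanism behind the intensity density transparent (Palm calculus plus a single negative moment of the subordinator), and comes with a built-in sanity check ($\int_0^1 u\,\rho_\alpha(u)\dif u=1$). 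Either way the conclusion is identical after Euler's reflection formula and the change of variables $p=e^{-t}$.
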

In particular, note that if in the infinite sites model with mutation rate $\theta$ we define $SFS_I$ to be the number of mutations 
shared by a proportion $u$ of individuals with $u$ ranging in $I$,
then by conditioning on $\ell_I$ we get 
\begin{corollary}\label{cor:EVSFSinfty}
For $I\subset (0,1)$, we have 
\begin{equation}\label{ESFSI}
\E\[SFS_I\]=\theta \int_I \int_0^1u^{-p-1}(1-u)^{p-1}\frac{\sin(\pi p)}{\pi p} \dif{p} \dif{u}.
\end{equation}
\end{corollary}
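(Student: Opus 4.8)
The plan is to reduce the statement to Theorem \ref{th:EVSFSinfty} via the defining feature of the infinite sites model, namely that the mutations are superimposed on the (random) coalescent tree as an independent Poisson process of intensity $\theta$ along the branches. The key identity to establish is $\E[SFS_I]=\theta\,\E[\ell_I]$; once this is in hand, substituting the formula for $\E[\ell_I]$ supplied by Theorem \ref{th:EVSFSinfty} yields \eqref{ESFSI} at once.

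First I would make precise when a mutation contributes to $SFS_I$. A mutation arising at coalescent time $t$ on the branch subtending a block $B$ of $\Pi^\infty(t)$ is carried by exactly the individuals in $B$, hence its population frequency equals the asymptotic frequency of $B$. Since the asymptotic frequency of a block does not change during the time interval in which that block persists (it is altered only at merger events, which create a new block sitting on a new branch), every mutation falling on the branch subtending $B$ shares the same frequency. Consequently a mutation contributes to $SFS_I$ precisely when it lands on a branch whose subtended block has asymptotic frequency in $I$, and the total length of all such branches accumulated over time is exactly $\ell_I=\int_0^\infty C_I(t)\dif{t}$ by the definition \eqref{def:ell_I} of $C_I$.

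Next I would condition on the whole coalescent tree, equivalently on the process $(\DAF{\Pi^\infty(t)})_{t\ge0}$ together with its block structure. Under the infinite sites model the mutations form a Poisson process of rate $\theta$ on the branches, independent of the tree, so by the restriction (thinning) property of Poisson processes the number of mutations landing on the branches counted by $C_I$ is, conditionally on the tree, Poisson with mean $\theta\,\ell_I$. Hence $\E[SFS_I\mid\text{tree}]=\theta\,\ell_I$, and taking expectations through the tower property gives $\E[SFS_I]=\theta\,\E[\ell_I]$. Invoking Theorem \ref{th:EVSFSinfty} then closes the argument.

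The step demanding the most care is the bookkeeping in the second paragraph: confirming that the frequency attached to a branch is constant along it and equal to the asymptotic frequency of the corresponding block, so that the accumulated length of the ``$I$-branches'' is genuinely $\ell_I$. This relies on the asymptotic frequencies being almost surely well defined and piecewise constant in $t$, which is guaranteed by the identification \eqref{PDlaw} with $PD(e^{-t},0)$. Once this is justified, the remainder is the routine Poissonization argument and presents no real obstacle, which is why the result is stated as a corollary.
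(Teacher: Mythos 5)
Your proposal is correct and follows the same route as the paper: the paper obtains the corollary from Theorem \ref{th:EVSFSinfty} precisely ``by conditioning on $\ell_I$,'' i.e.\ the Poissonization argument giving $\E[SFS_I]=\theta\,\E[\ell_I]$. Your write-up merely spells out the bookkeeping (constancy of block frequencies along branches and the thinning property) that the paper leaves implicit.
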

%------------------------- Proof of Expected Value in \infty-coalescent
\begin{proof}[Proof of Theorem $\ref{th:EVSFSinfty}$]
Since 
\begin{equation*}
\E\[\ell_I\]=\int_0^\infty\E\[C_I(t)\]\dif{t}
\end{equation*}
it only remains to compute $\E\[C_I(t)\]$ and simplify the expressions, but this is a straightforward consequence of Equation (6) in \cite{PY77} which states that if $\varrho=(a_1,\cdots)$ is $PD(\alpha,\theta)$ distributed, and $f:\R\to\R$ is a function, then
	\begin{equation}\label{Le:EVPD}
		\E\bigg[\sum_{i=1}^\infty f(a_i)\bigg]=
		\frac{\Gamma(\theta+1)}{\Gamma(\theta+\alpha)\Gamma(1-\alpha)}
		\int_0^1 f(u) \frac{(1-u)^{\alpha+\theta-1}}{u^{\alpha+1}} \dif{u}.
	\end{equation}
Taking $f(u)= \Ind{I}(u)$%, together with changing the variable $s=e^{-t}$ in \eqref{eq:EV_SFS1}, 
we get
\begin{equation*}
	\E[C_{I}(t)]=\frac{1}{\Gamma(e^{-t})\Gamma(1-e^{-t})}
	                                   \int_0^1 \Ind{I}(u) \frac{(1-u)^{e^{-t}-1}}{u^{e^{-t}+1}} \dif{u}.
\end{equation*}
Using Euler's reflection formula, making $p=e^{-t}$ on the above expression and integrating on $[0,\infty)$ we finish the proof.
\end{proof}
%-------------------------
\subsection{Time to the absorption}
In this section we prove a useful lemma for the upcoming proofs, but a first consequence of this lemma gives
the distribution function of the time to absorption, $A_n$, in the $n$-coalescent, a result already proved in 
$\cite{MohlePitters14}$.

%Our second application, computing the distribution function of the time to absorption in the $n$-coalescent, 
%rests on the following lemma concerning general random recursive trees. 
Here $Be$ stands for the Beta
function
\begin{equation*}
Be(x,y)=\frac{\Gamma(x)\Gamma(y)}{\Gamma(x+y)},
\end{equation*}
and $\Psi$ for the digamma function
\begin{equation*}
\Psi(x)=\frac{\Gamma'(x)}{\Gamma(x)} = -\gamma - \sum_{n=1}^\infty\(\frac{1}{z+n-1}-\frac{1}{n}\)
\end{equation*}
where $\gamma$ stands for the Euler-Mascheroni constant.

%------------------------- RRT Lemma for M(T) and m(T)

\begin{lemma}\label{le:lawsmTMT}
Let $T$ be a RRT on a set of $n$ labels and with exponential edges. Define the two functionals 
$m(T)$ and $M(T)$
 that give the minimum and the maximum of the exponential edges attached to the root of $T$.
Then
\begin{equation}\label{eq:lawmT}
\P(m(T)>s)=\frac{1}{(n-1)Be(n-1,e^{-s})},
\end{equation}
and
\begin{equation}\label{eq:lawMT}
\P(M(T)\leq s)=\frac{1}{(n-1)Be(n-1,1-e^{-s})}.
\end{equation}
Also, for independent trees $T_1$ and $T_2  $ of respective size $n_1$ and $n_2$, we have
\begin{align}\label{eq:lawmT-MT}
&\P(m(T_2)-M(T_1)>s)\nonumber\\=&\frac{1}{(n_1-1)(n_2-1)}\int_0^1 \frac{\Psi(n_1-p)-\Psi(1-p)}{Be(n_2-1,e^{-s}p)Be(n_1-1,1-p)} \dif{p}.
\end{align}
\end{lemma}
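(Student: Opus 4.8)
The three formulas rest on a single structural fact about the RRT $T$ on $n$ labels: the \emph{degree of the root}, i.e.\ the number $D_n$ of edges incident to the root, carries all the relevant randomness, and the exponential clocks on those edges are independent of the shape of $T$. Building $T$ by sequential attachment (node $j$ chooses its parent uniformly among $1,\dots,j-1$), node $j$ is joined to the root with probability $1/(j-1)$, independently over $2\le j\le n$. Hence $D_n=\sum_{j=2}^n B_j$ with $B_j$ independent Bernoulli$(1/(j-1))$, and its probability generating function telescopes:
\begin{equation*}
\E\[z^{D_n}\]=\prod_{j=2}^n\frac{j-2+z}{j-1}=\frac{\Gamma(z+n-1)}{\Gamma(z)\Gamma(n)}=\frac{1}{(n-1)Be(n-1,z)}.
\end{equation*}
Conditionally on $D_n$, the root edges carry $D_n$ i.i.d.\ standard exponentials, so $\P(m(T)>s\mid D_n)=e^{-D_n s}$ and $\P(M(T)\le s\mid D_n)=(1-e^{-s})^{D_n}$. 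Taking expectations and evaluating the generating function at $z=e^{-s}$, respectively $z=1-e^{-s}$, yields \eqref{eq:lawmT} and \eqref{eq:lawMT} at once.

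For the joint law \eqref{eq:lawmT-MT} the plan is to condition on $M(T_1)$ and integrate. Assuming $s\ge 0$ so that $x+s\ge0$ throughout, \eqref{eq:lawmT} applied to $T_2$ at level $x+s$ gives, since $e^{-(x+s)}=e^{-s}e^{-x}$,
\begin{equation*}
\P\(m(T_2)>x+s\)=\frac{1}{(n_2-1)Be(n_2-1,e^{-s}e^{-x})}.
\end{equation*}
The remaining ingredient is the density of $M(T_1)$, obtained by differentiating \eqref{eq:lawMT}: setting $b=1-e^{-x}$ (so $db/dx=e^{-x}$) and using $\frac{d}{db}\log\frac{\Gamma(b+n_1-1)}{\Gamma(b)}=\Psi(b+n_1-1)-\Psi(b)$, one finds, after writing $p=e^{-x}$ and $b=1-p$, that
\begin{equation*}
\frac{d}{dx}\P\(M(T_1)\le x\)=\frac{e^{-x}}{n_1-1}\,\frac{\Psi(n_1-p)-\Psi(1-p)}{Be(n_1-1,1-p)}.
\end{equation*}
Then $\P(m(T_2)-M(T_1)>s)=\int_0^\infty \P(m(T_2)>x+s)\,d\P(M(T_1)\le x)$, and the substitution $p=e^{-x}$, which maps $(0,\infty)$ bijectively onto $(0,1)$ with $e^{-x}\,dx=-dp$, turns this into exactly the claimed integral over $p\in(0,1)$; the two Beta factors in the integrand come respectively from the $T_2$ survival probability and from the $M(T_1)$ density, and the prefactor $1/((n_1-1)(n_2-1))$ assembles from the two $1/(n_i-1)$ terms.

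The routine part is the telescoping product together with the two one-line conditionings; the delicate step is \eqref{eq:lawmT-MT}, where the care lies in (i) differentiating the Gamma/Beta expression for the CDF of $M(T_1)$ into the digamma form above, and (ii) tracking the Jacobian and the orientation of the limits under $p=e^{-x}$ so that the positive integrand emerges with the correct sign. I would also flag that the hypothesis $s\ge0$ (equivalently $x+s\ge0$ on the range of integration) is precisely what legitimizes using \eqref{eq:lawmT} inside the integral; should the case $s<0$ be needed, one splits off the region $x<-s$, where $\P(m(T_2)>x+s)=1$, and treats it separately.
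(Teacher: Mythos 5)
Your proof is correct and follows essentially the same route as the paper: both marginal laws come from the same telescoping product $\prod_{j=2}^{n}\frac{j-2+z}{j-1}$ generated by the sequential RRT construction (you organize it as the probability generating function of the root degree evaluated at $z=e^{-s}$, resp.\ $z=1-e^{-s}$, while the paper folds the exponential-edge condition directly into each node's attachment probability), and the joint law \eqref{eq:lawmT-MT} is obtained exactly as in the paper, by integrating the tail of $m(T_2)$ against $d\,\P(M(T_1)\le t)$ and substituting $p=e^{-t}$. Your explicit digamma form of the density of $M(T_1)$ is just the differentiation the paper leaves implicit, and it checks out.
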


The proof of $\eqref{eq:lawMT}$ follows the same lines as in $\cite{MohlePitters14}$ where the law of the time to absorption
of the Bolthausen-Sznitman coalescent is derived, since this time is the maximum of the exponential edges attached to the root of a RRT. That is,  
\begin{equation}\label{eq:DistTreeHeight}
\P(A_n\leq s)=\frac{1}{(n-1)Be(n-1,1-e^{-s})},
\end{equation}
and, as $n\to\infty$,
\begin{equation}\label{cvA}
A_n-\log\log n \overset{d}{\to}-\log E
\end{equation}
where $E$ is a standard exponential random variable. The latter convergence in distribution was elegantly proved in \cite{GM05} using 
a construction of random recursive trees in continuous time, whereas in this case it follows from Stirling's approximation to the 
Gamma functions appearing in $\eqref{eq:DistTreeHeight}$.

On the other hand, the equality $\eqref{eq:lawmT-MT}$ will be used in the computation of the distribution function 
of branch lengths with large family sizes presented in Section \ref{sec:BLdistribution}.

%------------------------- Proof of Distribution Function for M(T) and m(T) (RRT functionals)
\begin{proof}[Proof of Lemma $\ref{le:lawsmTMT}$]
Let $E_2,\cdots,E_n$ be the exponential edges associated to the nodes of $T$.
For the proof of $\eqref{eq:lawmT}$ we consider the event $\{m(T)>s\}$.
This event occurs when, in the recursive construction of $T$ along with the exponential edges,
the $i$th node ($2\leq i \leq n$)
does not attach to $\{1\}$ whenever $E_i< s$;
this happens with probability $1-\frac{1-e^{-s}}{i-1}$. Thus, considering the $n$ nodes, we obtain
\begin{align*}
\P(m(T)>s)&= e^{-s}\bigg(\frac{1+e^{-s}}{2}\bigg)\dots\bigg(\frac{n-2+e^{-s}}{n-1}\bigg)\\
&=\frac{1}{(n-1)Be(n-1,e^{-s})}.
\end{align*}
For $\eqref{eq:lawMT}$ we instead build the tree such that the $i$th node does not attach
to $\{1\}$ whenever $E_i>s$; this happens with probability $1-\frac{e^{-s}}{i-1}$. Thus we obtain
\begin{align*}
\P(M(T)\leq s)&=(1-e^{-s})\bigg(\frac{2-e^{-s}}{2}\bigg)\dots\bigg(\frac{n-1-e^{-s}}{n-1}\bigg)\\
&=\frac{1}{(n-1)Be(n-1,1-e^{-s})}.
\end{align*}
Finally we compute
\begin{align*}
&\P(m(T_2)-M(T_1)>s)\\
=&\frac{1}{(n_1-1)(n_2-1)}\int_0^\infty \frac{1}{Be(n_2-1,e^{-(s+t)})} \der{\left(\frac{1}{Be(n_1-1,1-e^{-t})}\right)}{t}
   \dif{t}
\end{align*}
and by changing the variable $p=e^{-x}$ we obtain $\eqref{eq:lawmT-MT}$.
\end{proof}
%-------------------------

%----------------------------------------------------------------------%
%------------------------- Branch Length Moments ----------------------%
%----------------------------------------------------------------------%

%------------------------- First and Second Moments Explicit Formulas
\section{Moments of the Site Frequency Spectrum} \label{sec:BLmoments}
By a simple adaptation of our previous notation for branch lengths in the infinite coalescent ($C_I$ and $\ell_I$), 
in the finite case we also define for $1\leq b \leq n-1$
the process
$(C_{n,b}(t))_{t\geq 0}$ and the random variables $(\ell_{n,b})$, where $C_{n,b}(t)$ is the number of blocks of size $b$ in $\Pi^n(t)$,
and 
\begin{equation}\label{defellCn}
\ell_{n,b}\coloneqq\int_0^\infty C_{n,b}(t) \dif{t}.
\end{equation} We now provide explicit expressions for
$\E\[\ell_{n,b}\]$ and $\E\[\ell_{n,b_1}\ell_{n,b_2}\]$; for this we define the functions
\begin{equation*}
L_1(n,b)= \int_0^1 \frac{\Gamma(b-p)}{\Gamma(b+1)} \frac{\Gamma(n-b+p)}{\Gamma(n-b+1)}
\frac{\dif{p}}{\Gamma(1-p)\Gamma(1+p)},
\end{equation*}
\begin{align*}
 L_2(n,b_1,b_2)&=
\int_0^1\int_0^{p_1}\frac{\Gamma(b_1-p_1)}{\Gamma(b_1+1)}\frac{\Gamma(b_2-b_1+p_1-p_2)}{\Gamma(b_2-b_1+1)}\\&\times
             \frac{\Gamma(n-b_2+p_2)}{\Gamma(n-b_2+1)}\frac{\dif p_2 \dif p_1}{p_1\Gamma(1-p_1)\Gamma(p_1-p_2)\Gamma(p_2+1)}
\end{align*}
and
\begin{align*}
L_3(n,b_1,b_2)&=
\int_0^1\int_0^1 \frac{\Gamma(b_1-p_1)}{\Gamma(b_1+1)}\frac{\Gamma(b_2-p_2)}{\Gamma(b_2+1)}
\\&\times\frac{\Gamma(n-b_1-b_2+p_1+ p_2)}{\Gamma(n-b_1-b_2+1)} 
\frac{\dif p_2\dif p_1}{\Gamma(1-p_1)\Gamma(1-p_2)(p_1\vee p_2)\Gamma(p_1 + p_2)}.
\end{align*}

\begin{theorem}\label{th:Elnb}
For any pair of integers $n,b$ such that $1 \leq b \leq n-1$, we have
\begin{equation}\label{eq:Elnb}
\E[\ell_{n,b}]=n L_1(n,b)
\end{equation}
Also, for any triple of integers $n,b_1,b_2$, with $1\leq b_1 \leq b_2 \leq n-1$, we have
\begin{equation}\label{eq:Elb1lb2}
\E\[\ell_{n,b_1}\ell_{n,b_2}\]=
nL_2(n,b_1,b_2)+nL_3(n,b_1,b_2)\Ind{\{b_1+b_2\leq n\}}
\end{equation}
\end{theorem}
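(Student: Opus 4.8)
The plan is to treat the two assertions of Theorem~\ref{th:Elnb} by the same mechanism: reduce a branch-length integral to a time integral of an expected block count via Tonelli, evaluate the block-count expectation by sampling from the ranked frequencies $PD(e^{-t},0)$ of \eqref{PDlaw}, and finally pass from the time variable $t$ to $p=e^{-t}$ so that Euler's reflection formula produces the $\frac{\sin(\pi p)}{\pi p}$-type weights hidden inside the Gamma factors of $L_1,L_2,L_3$.

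For the first moment \eqref{eq:Elnb} I would first write, using \eqref{defellCn} and Tonelli, $\E[\ell_{n,b}]=\int_0^\infty \E[C_{n,b}(t)]\dif{t}$. Since $\Pi^n(t)$ is the restriction to $[n]$ of the exchangeable partition $\Pi^\infty(t)$, its blocks arise from a paintbox sampling of $n$ labels according to the ranked frequencies $(a_i)$, which by \eqref{PDlaw} are $PD(e^{-t},0)$-distributed. Conditionally on $(a_i)$ the number $N_i$ of labels landing in atom $i$ is binomial $(n,a_i)$, so $C_{n,b}(t)=\sum_i \Ind{N_i=b}$ and
\[
\E[C_{n,b}(t)]=\binom{n}{b}\,\E\Big[\sum_i a_i^{\,b}(1-a_i)^{n-b}\Big].
\]
Now I apply the Pitman--Yor identity \eqref{Le:EVPD} with $\alpha=e^{-t}$, $\theta=0$ and $f(u)=u^{b}(1-u)^{n-b}$; the integral collapses to $Be(b-\alpha,n-b+\alpha)/(\Gamma(\alpha)\Gamma(1-\alpha))$. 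Expanding the Beta function and the binomial coefficient into Gamma functions, the factor $\Gamma(n+1)/\Gamma(n)=n$ appears, and after substituting $p=e^{-t}$ (so $\dif{t}=-\dif{p}/p$ and $1/(p\,\Gamma(p))=1/\Gamma(1+p)$) one recognises exactly $n\,L_1(n,b)$.

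For the second moment \eqref{eq:Elb1lb2} the same Tonelli step gives $\E[\ell_{n,b_1}\ell_{n,b_2}]=\int_0^\infty\!\!\int_0^\infty \E[C_{n,b_1}(s_1)C_{n,b_2}(s_2)]\dif{s_1}\dif{s_2}$, and expanding each count as a sum of indicators over candidate blocks yields $\E[\cdots]=\sum_{B_1,B_2}\P(B_1\text{ is an }s_1\text{-block of size }b_1,\ B_2\text{ is an }s_2\text{-block of size }b_2)$. Here I would exploit the coarsening (nestedness) of the coalescent: any two blocks taken at two times are either nested or disjoint. Since $b_1\le b_2$, the nested configuration $B_1\subset B_2$ can only occur when the $b_1$-block sits at the earlier (finer) time, i.e.\ $s_1<s_2$, equivalently $p_2<p_1$; this produces the triangular integration region $\int_0^1\!\int_0^{p_1}$ of $L_2$. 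The disjoint configuration can occur for either ordering of $(s_1,s_2)$ and contributes over the full square, which explains the symmetric weight $(p_1\vee p_2)\,\Gamma(p_1+p_2)$ and the constraint $\Ind{\{b_1+b_2\le n\}}$ in $L_3$.

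It remains to evaluate the two joint block-probabilities, and this is where the real work and the main obstacle lie. The clean route is a two-time analogue of the sampling argument above: conditionally on the coupled frequencies at both times, I would use that $PD(e^{-s_2},0)$ is a coagulation of $PD(e^{-s_1},0)$, so that inside a tagged coarse ($s_2$-)block the finer ($s_1$-)partition is again governed by a Poisson--Dirichlet law. Sampling $n$ labels and demanding that a chosen $b_1$-subset be one finer atom inside a chosen $b_2$-atom (nested case), or that two chosen subsets lie in distinct coarse atoms (disjoint case), turns each joint probability into a multinomial expectation that factorises into three ``layers'': the inner block of size $b_1$, the annulus of size $b_2-b_1$ (resp.\ the second block of size $b_2$), and the outside of size $n-b_2$ (resp.\ $n-b_1-b_2$). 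Applying \eqref{Le:EVPD} twice --- once for the coarse frequencies, once for the refinement inside the tagged block --- produces iterated Beta integrals; the substitution $p_i=e^{-s_i}$ with Euler's reflection formula then yields precisely $L_2$ and $L_3$, again with the prefactor $n$ from $\Gamma(n+1)/\Gamma(n)$. The delicate points I anticipate are: pinning down the exact conditional (fragmentation) law of the finer partition inside a coarse block and the correct combinatorial weights $\binom{n}{b_2}\binom{b_2}{b_1}$ versus $\binom{n}{b_1}\binom{n-b_1}{b_2}$; handling the coincidence $B_1=B_2$ on the diagonal $b_1=b_2$; and bookkeeping the two time-orderings so that the nested part lands only on the triangle while the disjoint part covers the square. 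Alternatively, all of this can be carried out by hand on the RRT, classifying each edge according to whether its exponential mark falls below $s_1$, between $s_1$ and $s_2$, or above $s_2$, and using the min/max-edge computations of Lemma~\ref{le:lawsmTMT} as templates; this avoids the Poisson--Dirichlet fragmentation calculus at the cost of heavier combinatorics.
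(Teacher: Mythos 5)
Your treatment of the first moment is complete and is exactly the paper's proof: Tonelli, the expansion of $C_{n,b}(t)$ into indicators, exchangeability, the Pitman--Yor identity \eqref{Le:EVPD}, and the substitution $p=e^{-t}$. For the second moment your scaffolding (Tonelli, the double sum over candidate blocks, the nested/disjoint dichotomy) also coincides with the paper's. But the heart of the proof --- the actual values of $\P(B_1\in\Pi^n(t_1),\,B_2\in\Pi^n(t_2))$ in the nested and disjoint cases --- is precisely what you leave uncomputed, and your primary route for computing them has a missing ingredient. To run the two-level Poisson--Dirichlet argument you must know the conditional law of $\Pi^\infty(s_1)$ restricted to a tagged block of $\Pi^\infty(s_2)$; by Pitman's coagulation--fragmentation duality this is the \emph{two-parameter} law $PD(e^{-s_1},-e^{-s_2})$, with negative second parameter, not a $PD(e^{-s_1},0)$, and you never pin it down. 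Moreover, the disjoint case requires the joint law of one tagged coarse block together with the fragmentation of the \emph{remaining} coarse blocks, i.e.\ a two-block functional of the coarse partition, which a single application of \eqref{Le:EVPD} does not deliver. So the proposal stops short of a proof at its central step. The paper avoids the duality altogether: it evaluates each joint probability by building the RRT node by node and multiplying attachment probabilities (this is your ``alternative'' route, which you mention but do not carry out), and that elementary computation \emph{is} the proof.

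There is also a step that fails as written. You assert that the nested configuration can only occur when $s_1<s_2$, hence lives on the triangle $p_2<p_1$. That is true for strict containment, but the statement allows $b_1=b_2$, where ``nested'' means $B_1=B_2$, and then the event $\{B\in\Pi^n(t_1)\}\cap\{B\in\Pi^n(t_2)\}$ is symmetric in $(t_1,t_2)$: it contributes over the whole square, i.e.\ twice the triangle. This is not a technicality that can be absorbed later: for $n=2$, $b_1=b_2=1$ one has $\ell_{2,1}=2T$ with $T$ a standard exponential, so $\E\bigl[\ell_{2,1}^2\bigr]=8$, whereas $nL_2(2,1,1)+nL_3(2,1,1)=2+4=6$; the equal-block contribution must enter as $2nL_2$. (Note that the paper's own proof also integrates the equal-block case over the triangle only, so the displayed identity \eqref{eq:Elb1lb2} is literally valid only for $b_1<b_2$; on the diagonal the $L_2$ term needs the factor $2$.) Hence the ``delicate point'' you flag about $B_1=B_2$ is genuinely load-bearing: whichever route you take, the equal-size case must be integrated over the full square, and doing so changes the formula in the case $b_1=b_2$.
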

As before, we may define $SFS_{n,b}$ as the number of mutations shared by $b$ individuals in the $n$-coalescent. 
By conditioning on the value of the associated branch lengths we get 
\begin{corollary}\label{cor:SFS_EvCov}
For $1\leq b\leq n-1$,
\begin{equation*}
\E[SFS_{n,b}]=\theta n L_1(n,b)
\end{equation*}
and, for $1\leq b_1 \leq b_2 \leq n-1$, we have,
\begin{align*}
\Cov\(SFS_{n,b_1},SFS_{n,b_2}\)=&\theta^2nL_2(n,b_1,b_2)+\theta^2nL_3(n,b_1,b_2)\Ind{b_1+b_2\leq n}\\&-\theta^2n^2L_1(n,b_1)L_1(n,b_2)+
\theta nL_1(n,b) \Ind{b_1=b=b_2}.
\end{align*}
 \end{corollary}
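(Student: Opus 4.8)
The plan is to deduce Corollary~\ref{cor:SFS_EvCov} from Theorem~\ref{th:Elnb} by Poissonising the mutations, so that the entire probabilistic content sits in the branch-length moments $\E[\ell_{n,b}]$ and $\E[\ell_{n,b_1}\ell_{n,b_2}]$ already computed there. First I would fix a realisation of the coalescent $\Pi^n$ --- equivalently, of the underlying RRT with its exponential edges --- and recall that in the infinite-sites model the mutations are thrown on the skeleton of this tree as an independent Poisson process of intensity $\theta$ per unit length. By definition $SFS_{n,b}$ counts the mutations lying on branches that subtend exactly $b$ of the $n$ leaves, and the total length of those branches is precisely the variable $\ell_{n,b}$ of \eqref{defellCn} (which is measurable with respect to the tree, since $C_{n,b}(t)$ is read off $\Pi^n(t)$). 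Hence, conditionally on the tree, $SFS_{n,b}$ is a Poisson variable with random parameter $\theta\,\ell_{n,b}$.

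The expectation is then immediate: by the tower property $\E[SFS_{n,b}]=\E\big[\E[SFS_{n,b}\mid\text{tree}]\big]=\theta\,\E[\ell_{n,b}]=\theta n L_1(n,b)$ using \eqref{eq:Elnb}. For the mixed second moment the key observation is that the branches subtending $b_1$ leaves and those subtending $b_2$ leaves are \emph{disjoint} subsets of the skeleton whenever $b_1\neq b_2$; since a Poisson process assigns independent counts to disjoint sets, $SFS_{n,b_1}$ and $SFS_{n,b_2}$ are conditionally independent given the tree in that case, so $\E[SFS_{n,b_1}SFS_{n,b_2}\mid\text{tree}]=\theta^2\ell_{n,b_1}\ell_{n,b_2}$. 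When $b_1=b_2=b$ the two variables coincide and I must instead use the conditional Poisson variance, $\E[SFS_{n,b}^2\mid\text{tree}]=\theta\ell_{n,b}+\theta^2\ell_{n,b}^2$. Writing both cases together gives
\begin{equation*}
\E[SFS_{n,b_1}SFS_{n,b_2}]=\theta^2\,\E[\ell_{n,b_1}\ell_{n,b_2}]+\theta\,\E[\ell_{n,b}]\,\Ind{\{b_1=b_2=b\}}.
\end{equation*}
Substituting \eqref{eq:Elb1lb2} and \eqref{eq:Elnb}, subtracting $\E[SFS_{n,b_1}]\E[SFS_{n,b_2}]=\theta^2n^2L_1(n,b_1)L_1(n,b_2)$, and collecting terms yields exactly the claimed formula for $\Cov(SFS_{n,b_1},SFS_{n,b_2})$, with the diagonal $\Ind{\{b_1=b_2\}}$ term being the sole place where the conditional Poisson fluctuations survive.

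Since Theorem~\ref{th:Elnb} is assumed, the Corollary requires only this careful conditional-Poisson bookkeeping, and the one point to check is that the different $SFS_{n,b}$ really are counts of mutations on disjoint branch families, so that the correlation between $SFS_{n,b_1}$ and $SFS_{n,b_2}$ is entirely inherited from the joint moment of the lengths $\ell_{n,b_1},\ell_{n,b_2}$. The genuinely hard input is Theorem~\ref{th:Elnb} itself, so I will flag where its difficulty lies. The first moment is straightforward: by exchangeability of the partition one has the tagged-element identity $\E[C_{n,b}(t)]=\tfrac{n}{b}\,\P(|B^t(1)|=b)$, where $B^t(1)$ is the block containing leaf $1$ at time $t$; the Chinese-restaurant description of $\Pi^n(t)$ with $\alpha=e^{-t}$ computes this probability as a ratio of rising factorials, and the substitution $p=e^{-t}$ (so $\dif t=\dif p/p$) turns $\int_0^\infty\E[C_{n,b}(t)]\dif t$ into $nL_1(n,b)$.

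The real obstacle is the second moment $\E[\ell_{n,b_1}\ell_{n,b_2}]=\int_0^\infty\!\int_0^\infty\E[C_{n,b_1}(t_1)C_{n,b_2}(t_2)]\dif{t_1}\dif{t_2}$, which needs the joint law of block sizes at two times $t_1<t_2$ under the \emph{single} RRT coupling (where $\Pi^n(t_1)$ refines $\Pi^n(t_2)$). Expanding with two tagged leaves, I expect the computation to split according to whether the two tagged blocks are \emph{nested} --- giving, via a three-stage sequential growth with segment sizes $b_1$, $b_2-b_1$ and $n-b_2$, the three Gamma-ratio integrand of $L_2$ over the ordered region $p_2<p_1$ --- or \emph{disjoint}, giving $L_3$ together with the constraint $b_1+b_2\le n$ that merely records that two disjoint blocks must fit inside $n$ leaves. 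Carrying out this two-time CRP/urn bookkeeping consistently (including the $p_1\vee p_2$ and $\Gamma(p_1+p_2)$ interaction factors of $L_3$) is the step I anticipate to be the most delicate.
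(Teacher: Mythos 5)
Your proposal is correct and follows essentially the same route as the paper, which obtains the corollary from Theorem~\ref{th:Elnb} precisely by conditioning on the branch lengths: given the tree, $SFS_{n,b}$ is Poisson with parameter $\theta\ell_{n,b}$, the counts for $b_1\neq b_2$ are conditionally independent since they live on disjoint parts of the skeleton, and the diagonal term $\theta n L_1(n,b)\Ind{b_1=b=b_2}$ comes from the conditional Poisson variance exactly as you describe. Your closing sketch of how Theorem~\ref{th:Elnb} itself is proved (tagged leaves, two-time CRP bookkeeping, the nested versus disjoint split giving $L_2$ and $L_3$) is not needed for the corollary but also matches the paper's method.
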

%------------------------- Asymptotic Behavior of L1,L2, and L3
We also characterize the asymptotic behavior of the functions $L_1,L_2$ and $L_3$ as $n\to\infty$, which in turn give asymptotic 
approximations for the first and second moments of the branch lengths and of $SFS$. For this we recall the function $f_1$ defined in 
$\eqref{eq:f1}$
%, and also define ,
%\begin{equation}\label{eq:f1}
%f_1(u)\coloneqq %\frac{1}{u^2(1-u)}\int_0^1 u^{1-p}(1-u)^{p} \frac{\sin (\pi p)}{\pi p}\dif{p}, \hspace{5pt}
%                \int_0^1 u^{-p-1}(1-u)^{p-1} \frac{\sin (\pi p)}{\pi p} \dif{p} \hspace{5pt}, \hspace{5pt}
%\end{equation}
and also define for $0<u_1<u_2<1$,
\begin{equation}\label{eq:f2}
      f_2(u_1,u_2)\coloneqq \int_0^1\int_0^{p_1} \frac{u_1^{-p_1-1}
                              (u_2-u_1)^{p_1-p_2-1}
                              \(1-u_2\)^{p_2-1}}
                              {p_1\Gamma(1-p_1)\Gamma(p_1-p_2)\Gamma(p_2+1)}\dif{p_2}\dif{p_1}, \hspace{5pt}
\end{equation}
and, for $u_1,u_2>0, u_1+u_2< 1$,
\begin{equation}\label{eq:f3}
    f_3(u_1,u_2)\coloneqq\int_0^1\int_0^1 \frac{u_1^{-p_1-1}u_2^{-p_2-1}
                                \(1-u_1-u_2\)^{p_1+p_2-1}}
                        {\Gamma(1-p_1)\Gamma(1-p_2)(p_1\vee p_2)\Gamma(p_1 + p_2)}\dif{p_2}\dif{p_1} \hspace{5pt}.
\end{equation}

\begin{lemma}\label{le:ASMPL1L2L3}
We have as $n\to\infty$,
\begin{equation}\label{eq:ASMPL1}
\max_{2\leq b \leq n-1} \abs{\frac{n^2L_1(n,b)}{f_1\(\frac{b-1}{n-1}\)} - \frac{b-1}b} \to 0,
\end{equation}
whereas for  $b=1$,
\begin{equation}\label{eq:ASM_EV_SFS1}
\frac{n^2}{(\log n) f_1\(\frac{1}{n-1}\)}L_1(n,1)\to 1.
\end{equation}
Similarly
%as $n\to\infty$
\begin{equation}\hspace{-30pt}\label{eq:ASMPL2}
%L_2(n,b_1,b_2)=\frac{f_2(b_1/n,b_2/n)}{n^3}\(1+\bO{\frac{1}{b_1}}+\bO{\frac{1}{b_2-b_1}}+\bO{\frac{1}{n-b_2}}\)
\max_{2\leq b_1 < b_2\leq n-1}\abs{\frac{n^3L_2(n,b_1,b_2)}{f_2\(\frac{b_1-1}{n-1},\frac{b_2-1}{n-1}\)}-
                                      \frac{b_1-1}{b_1}} \to 0,
\end{equation}
and if also $b_1 \vee (n-b_2)\to \infty$ then
\begin{equation}\hspace{-30pt}\label{eq:ASMPL3}
%L_3(n,b_1,b_2)=\frac{f_3(b_1/n,b_2/n)}{n^3}\(1+\bO{\frac{1}{b_1}}+\bO{\frac{1}{b_2}}+\bO{\frac{1}{n-b_1-b_2}}\).
	\max_{\substack{2\leq b_1 \leq b_2\leq n-1 \\ b_1+b_2 < n}}
      \abs{\frac{n^3L_3(n,b_1,b_2)}{f_3\(\frac{b_1-1}{n-2},\frac{b_2-1}{n-2}\)}-
                                    \(\frac{b_1-1}{b_1}\)\(\frac{b_2-1}{b_2}\)}\to 0.
\end{equation}
\end{lemma}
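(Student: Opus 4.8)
The plan is to prove all three estimates by one scheme, differing only in the number of integration variables. The common engine is the uniform ratio asymptotic for the Gamma function,
\[
\frac{\Gamma(N+\alpha)}{\Gamma(N+\beta)} = N^{\alpha-\beta}\left(1 + \bO{\tfrac{1}{N}}\right),
\]
valid uniformly for $\alpha,\beta$ in a fixed compact set and $N\geq 1$, together with Euler's reflection formula, used exactly as in the proof of Theorem~\ref{th:EVSFSinfty} to recast $\frac{1}{\Gamma(1-p)\Gamma(1+p)}=\frac{\sin(\pi p)}{\pi p}$ and the two–variable prefactors occurring in $L_2,L_3$. The strategy is in each case to factor out of $L_i$ the very kernel that defines $f_i$, so that the normalized ratio becomes the average of a Gamma-ratio correction against a probability measure, and then to show that this average converges to the stated constant.

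First I would treat \eqref{eq:ASMPL1}. Writing $u=\frac{b-1}{n-1}$, a direct substitution in \eqref{eq:f1} gives $f_1(u)=(n-1)^2\int_0^1 (b-1)^{-p-1}(n-b)^{p-1}\frac{\sin(\pi p)}{\pi p}\,\dif{p}$, whence $\frac{n^2L_1(n,b)}{f_1(u)}=\frac{n^2}{(n-1)^2}\,\langle \Phi_{n,b}\rangle$, where $\langle\,\cdot\,\rangle$ is the average against the probability measure on $[0,1]$ with density proportional to $(b-1)^{-p-1}(n-b)^{p-1}\frac{\sin(\pi p)}{\pi p}$ and
\[
\Phi_{n,b}(p)=\frac{\Gamma(b-p)}{\Gamma(b+1)}\frac{\Gamma(n-b+p)}{\Gamma(n-b+1)}\,(b-1)^{p+1}(n-b)^{1-p}.
\]
Everything then reduces to $\langle\Phi_{n,b}\rangle\to\frac{b-1}{b}$, uniformly. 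Two facts drive this. The ratio asymptotic yields the pointwise expansion $\Phi_{n,b}(p)-\frac{b-1}{b}=\frac{p(p-1)}{2}\left(\frac1b+\frac{1}{n-b}\right)+\text{h.o.t.}$, uniformly in $p$, which settles the bulk range where both $b$ and $n-b$ are large. The endpoint value is exact: $\Phi_{n,b}(1)=\frac{b-1}{b}$ for every $n,b$. When $b$ (resp.\ $n-b$) stays bounded the expansion fails for the corresponding factor, but then $(n-b)^{p-1}$ (resp.\ $b^{-p-1}$) concentrates the measure near $p=1$ (resp.\ $p=0$) on scale $1/\log(\cdot)$; since $\Phi_{n,b}$ is uniformly bounded and continuous up to that endpoint, a Watson-type evaluation of the normalizing integral (note $\frac{\sin(\pi p)}{\pi p}$ vanishes linearly at $p=1$) again forces $\langle\Phi_{n,b}\rangle\to\frac{b-1}{b}$. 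The case $b=1$, namely \eqref{eq:ASM_EV_SFS1}, is the same computation: here the $\Gamma(1-p)$ factors cancel, giving $L_1(n,1)\sim 1/\log n$, and the stated normalization by $\log n\cdot f_1(\tfrac1{n-1})$ is matched through the near-zero behaviour $f_1(u)\sim(u\log u)^{-2}$.

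For \eqref{eq:ASMPL2} and \eqref{eq:ASMPL3} I would run the identical reduction in $(p_1,p_2)$: factor out of $L_2$ (resp.\ $L_3$) the kernel of \eqref{eq:f2} (resp.\ \eqref{eq:f3}), write the normalized ratio as an average of a two-variable correction $\Phi^{(2)}$ (resp.\ $\Phi^{(3)}$) against a probability measure on the simplex $\{0<p_2<p_1<1\}$ (resp.\ the square), and evaluate at the dominant corner. The Gamma-ratio bookkeeping is arranged so that the first factor contributes $\frac{b_1-1}{b_1}$ as $p_1\to1$ while the factors carrying $b_2-b_1$ and $n-b_2$ tend to $1$; this produces the single constant $\frac{b_1-1}{b_1}$ in \eqref{eq:ASMPL2}. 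For $L_3$ the two outer factors are on an equal footing and the corner $p_1=p_2=1$ yields the product $\frac{b_1-1}{b_1}\frac{b_2-1}{b_2}$ in \eqref{eq:ASMPL3}.

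The main obstacle is the uniformity, concentrated entirely in the boundary regimes. Whenever one of the gaps ($b$, $n-b$, $b_2-b_1$, $n-b_2$, or the middle gap $n-b_1-b_2$) stays bounded, the associated Stirling ratio is not close to a power and $\Phi$ cannot be controlled pointwise; one must instead show that the weighting measure concentrates away from the offending coordinate so that only the exactly computed corner value survives, and make this quantitative simultaneously over the whole triangular range of $(b_1,b_2)$. This is precisely where the hypothesis $b_1\vee(n-b_2)\to\infty$ in \eqref{eq:ASMPL3} is needed: it excludes the single degenerate configuration in which $b_1$ and $n-b_2$ are both bounded—which also forces $n-b_1-b_2$ bounded—where the measure fails to concentrate at $(1,1)$ and the product constant does not emerge. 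Carrying out this concentration-plus-equicontinuity argument uniformly, in every regime at once, is the real work of the proof.
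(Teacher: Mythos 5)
Your proposal is correct and takes essentially the same route as the paper's proof: uniform Stirling ratio asymptotics for the Gamma function plus Euler's reflection formula in the bulk range where all gaps are large, and a Laplace-type analysis on the $1/\log$ scale for the boundary regimes where some gap ($b$, $n-b$, $b_2-b_1$, $n-b_2$, or $n-b_1-b_2$) stays bounded (the paper does this via the substitutions $p=y/\log b$ and $1-p=y/\log n$ with dominated convergence, computing $L_1$ and $f_1$ separately), with the same outline-level treatment of $L_2$ and $L_3$ that the paper itself gives. Your repackaging of the normalized ratio as an average of a correction factor $\Phi_{n,b}$ against a probability measure, exploiting the exact endpoint identity $\Phi_{n,b}(0)=\Phi_{n,b}(1)=\frac{b-1}{b}$ together with measure concentration, is a mild organizational variant rather than a different method, and your explanation of why the hypothesis $b_1\vee(n-b_2)\to\infty$ is needed in \eqref{eq:ASMPL3} correctly identifies (indeed elaborates beyond the paper's remark) the degenerate configuration that the lemma excludes.
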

\begin{remark}
The above lemma does not cover the cases $b_1=1$ or $b_1=b_2$ for $L_2$, nor the cases $b_1 = 1$, $b_2=1$, $n=b_1+b_2$
or $b_1 \vee (n-b_2) \not\to\infty$ for $L_3$. However, using the same techniques we also obtain asymptotics
in these cases which are used in Theorem $\ref{th:AsmpCov}$ below.
\end{remark}
The proof of the above lemma also gives asymptotic expressions for the functions $f_1, f_2$ and $f_3$, leading
to straightforward asymptotics for the expectation and covariance of $SFS$. 
The complete picture for the first moment is given in the next result. 

%------------------------- Asymptotic Formulas for SFS
\begin{theorem}\label{cor:ASMEV_SFSI}
As $n$ goes to infinity,\\
(i) The expected number of external mutations ($b=1$) has the following asymptotics
\begin{equation*}\label{eq:EV_SFSlim1}
\frac{\log n}{n}\E[SFS_{n,1}]\to \theta.
\end{equation*}
(ii) If $b\geq2$ and $\frac{b}{n}\to 0$, then
\begin{equation*}\label{eq:EV_SFSlimb}
\frac{b(b-1)}{n}\log^2 \(\frac nb\)\E[SFS_{n,b}]\to \theta.
\end{equation*}
(iii) If $\frac{b}{n}\to u \in (0,1)$, then
\begin{equation*}\label{eq:EV_SFSlim}
n\E[SFS_{n,b}]\to \theta f_1(u)=\theta\int_0^1 u^{-1-p}(1-u)^{p-1} \frac{\sin(\pi p)}{\pi p}\dif{p}.
\end{equation*}
(iv) If $\frac{n-b}{n}\to 0$, then
\begin{equation*}
(n-b)\log \(\frac n{n-b}\) \E[SFS_{n,b}]\to \theta.
\end{equation*}
(v) Let $I=(x,y)$ with $0<x<y<1$ and define
\begin{equation*}\label{cvinterval}
SFS_{n,I}\coloneqq\sum_{b=\lceil nx \rceil }^{\lfloor ny \rfloor} SFS_{n,b}.
\end{equation*}
Then
\begin{equation*}\label{eq:EV_SFSIntervalLim}
\E\[SFS_{n,I}\]\to\E[SFS_I]
\end{equation*}
{as it is defined in \eqref{ESFSI}.}
\end{theorem}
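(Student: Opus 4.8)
The plan is to derive all five statements from the exact identity $\E[SFS_{n,b}]=\theta n L_1(n,b)$ of Corollary~\ref{cor:SFS_EvCov}, combined with the asymptotics of $L_1$ recorded in Lemma~\ref{le:ASMPL1L2L3} and the two endpoint estimates $f_1(u)\sim(u\log u)^{-2}$ as $u\to0$ and $f_1(u)\sim\big((u-1)\log(1-u)\big)^{-1}$ as $u\to1$ (equations~\eqref{f1at0}--\eqref{f1at1}). Each regime (i)--(v) then reduces to inserting the correct local behaviour of $f_1$ at the point $u=\tfrac{b-1}{n-1}$ and simplifying the resulting logarithmic factors; no new probabilistic input is required beyond the lemma.

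For the small-family regime I would first take $b=1$. Writing $\E[SFS_{n,1}]=\theta nL_1(n,1)$ and using \eqref{eq:ASM_EV_SFS1}, the task reduces to evaluating $f_1\!\big(\tfrac1{n-1}\big)$; the near-$0$ estimate gives $f_1\!\big(\tfrac1{n-1}\big)\sim n^2/\log^2 n$, whence $L_1(n,1)\sim 1/\log n$ and $\tfrac{\log n}{n}\E[SFS_{n,1}]\to\theta$, which is (i). For $b\ge2$ with $b=\lo{n}$ I would use the uniform statement \eqref{eq:ASMPL1}, which (since $\tfrac{b-1}{b}$ is bounded below) yields $n^2L_1(n,b)\sim\tfrac{b-1}{b}\,f_1\!\big(\tfrac{b-1}{n-1}\big)$; inserting the near-$0$ estimate at $u=\tfrac{b-1}{n-1}\to0$ produces the factor $\big(\tfrac{b-1}{n-1}\log\tfrac{b-1}{n-1}\big)^{-2}$. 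The one point requiring care is that $\log\tfrac{b-1}{n-1}$ must be replaced by $-\log\tfrac nb$ along the whole range $2\le b=\lo{n}$; this is legitimate because $\log\tfrac{n-1}{b-1}\big/\log\tfrac nb\to1$ in both the bounded-$b$ and the $b\to\infty$ cases. After simplification one obtains $\E[SFS_{n,b}]\sim\theta\,n\big/\big(b(b-1)\log^2\tfrac nb\big)$, which is (ii).

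The bulk and large-family regimes are shorter. When $b/n\to u\in(0,1)$ one has $\tfrac{b-1}{n-1}\to u$, $\tfrac{b-1}{b}\to1$, and $f_1$ is continuous at $u$, so \eqref{eq:ASMPL1} gives $n^2L_1(n,b)\to f_1(u)$ and hence $n\E[SFS_{n,b}]\to\theta f_1(u)$; this is (iii), the displayed integral being exactly the definition \eqref{eq:f1}. When $(n-b)/n\to0$ I would apply the near-$1$ estimate at $u=\tfrac{b-1}{n-1}$, using $1-u=\tfrac{n-b}{n-1}$ and $\log\tfrac{n-1}{n-b}\sim\log\tfrac n{n-b}$, to get $\E[SFS_{n,b}]\sim\theta\big/\big((n-b)\log\tfrac n{n-b}\big)$, which is (iv).

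Finally, for (v) I would sum the uniform estimate \eqref{eq:ASMPL1} over $\lceil nx\rceil\le b\le\lfloor ny\rfloor$, obtaining
\begin{equation*}
\E[SFS_{n,I}]=\theta n\sum_b L_1(n,b)=\theta\cdot\frac1n\sum_b\frac{b-1}{b}\,f_1\!\Big(\frac{b-1}{n-1}\Big)\,(1+\lo{1}),
\end{equation*}
a Riemann sum for $\theta\int_x^y f_1(u)\,\dif{u}$. Since $I=(x,y)$ is bounded away from $0$ and $1$, the function $f_1$ is continuous and bounded on $[x,y]$ and $\tfrac{b-1}{b}\to1$ uniformly there, so the sum converges to $\theta\int_x^y f_1$, which by \eqref{eq:f1} equals the double integral in \eqref{ESFSI}, i.e.\ $\E[SFS_I]$. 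The main obstacle throughout is not the limits themselves but the uniform replacement of $\log\tfrac{b-1}{n-1}$ by $-\log\tfrac nb$ (and of $\log\tfrac{n-1}{n-b}$ by $\log\tfrac n{n-b}$) in the small- and large-family regimes, where one must check that the error terms of Lemma~\ref{le:ASMPL1L2L3} and the slowly varying logarithms interact harmlessly.
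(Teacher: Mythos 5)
Your proposal follows the paper's route exactly: the identity $\E[SFS_{n,b}]=\theta n L_1(n,b)$ from Corollary~\ref{cor:SFS_EvCov}, the uniform asymptotics of Lemma~\ref{le:ASMPL1L2L3}, and the endpoint behaviour of $f_1$, inserted regime by regime. All five of your case computations are correct and match the paper's, including the point you flag about replacing $\log\frac{b-1}{n-1}$ by $-\log\frac{n}{b}$ (legitimate since the two logarithms differ by a bounded quantity while $\log(n/b)\to\infty$), and the Riemann-sum argument for (v).

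The one genuine incompleteness is your treatment of the endpoint estimates $f_1(u)\sim (u\log u)^{-2}$ as $u\downarrow 0$ and $f_1(u)\sim ((u-1)\log(1-u))^{-1}$ as $u\uparrow 1$. You cite these as equations \eqref{f1at0} and \eqref{f1at1}, but in the paper those displays are not freestanding results available for citation: they are established \emph{inside the proof of this very theorem}, and their derivation constitutes the first half of the paper's argument. Since regimes (i), (ii) and (iv) all rest on them, a complete proof must include these derivations. They are obtained from the defining integral \eqref{eq:f1} by dominated convergence: for $u\downarrow 0$ one bounds $(1-u)^{p-1}\le 2$ for $u<1/2$, rewrites the integrand using $\Gamma(1-p)\Gamma(1+p)=\pi p/\sin(\pi p)$, substitutes $y=(1-p)\log(1/u)$, and obtains $f_1(u)\sim \frac{1}{u^2\log^2 u}\int_0^\infty y e^{-y}\dif{y}$; for $u\uparrow 1$ one substitutes $y=-p\log(1-u)$ and obtains $f_1(u)\sim -\frac{1}{(1-u)\log(1-u)}\int_0^\infty e^{-y}\dif{y}$. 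With these two computations added, your proof coincides with the paper's.
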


Case (i) and case (ii) for fixed $b$ also follow from Theorem 4 in \cite{DK}. Cases (ii) and (iv) give an update to the approximation 
of the SFS for small and large families made in $\cite{NH13}$. \par

\begin{figure}
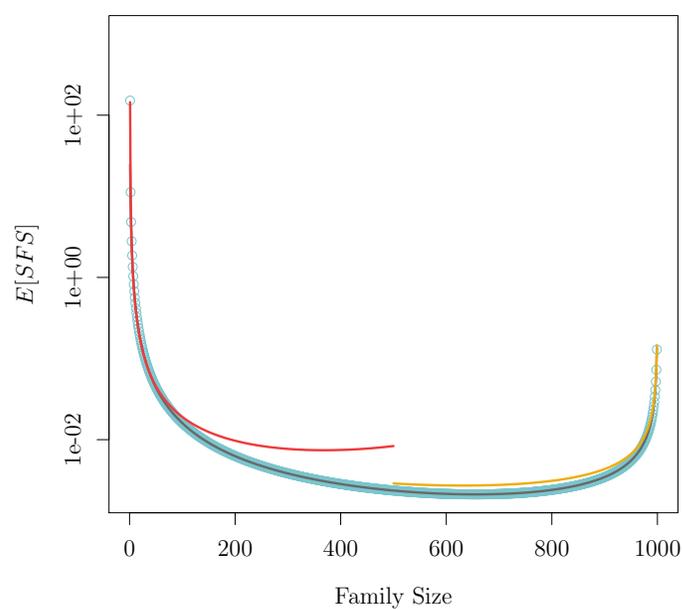

\centering
%\hspace*{1.8cm}
\scalebox{0.75}{
% [inline block 1: 1 envs, 148403 chars -> data_tex | \begin{tikzpicture}[x=1pt,y=1pt] \definecolor{fillColor}{RGB}{255,255,255}...]

}
\caption{Exact and asymptotic approximations for $\E[SFS]$ in a population of size 1000: The blue circles give the exact value as given in Corollary $\ref{cor:SFS_EvCov}$. The gray line is the asymptotic approximation as given in Theorem $\ref{cor:ASMEV_SFSI}$ (iii). Red (resp. yellow) line is given by Theorem $\ref{cor:ASMEV_SFSI}$ (ii) (resp. (iv)).}
\label{fig:SFSAsmpCompare}
\end{figure}\par

In the same spirit and using the same techniques 
%to also approximate $L_2$ in the remaining cases 
%where $b_1=1$ or $b_1=b_2$, and $L_3$ in the cases where $b_1 = 1$, $b_2=1$ or $n=b_1+b_2$, 
we now provide the complete picture for the second moments. In what follows we use the notation $f(n)\sim g(n)$ to denote that
\begin{equation*}
\frac{f(n)}{g(n)}\to 1
\end{equation*}
as $n\to\infty$.

\begin{theorem}\label{th:AsmpCov}
The covariance function has the following asymptotics as $n$ goes to infinity, in each of the following cases:\par
\hspace{-60pt}
\begin{tabular}{c c c c}
\\
\hline\hline
%inserts double horizontal lines
$b_1$ & $b_2-b_1$ & $n-b_2$ & $\Cov(SFS_{n,b_1},SFS_{n,b_2})$  \\ [0.5ex]
% inserts table
\hline
\\
%--- Case 
$>1$ & $>0$ & $\sim n$ & 
$\frac{\theta^2}{b_1(b_1-1)b_2(b_2-1)} \bO{\frac{n^2}{\log^5n}}$ \\
%--- Case
$\sim n$ & $>0$ & $>0$ &
$\frac{\theta^2}{(b_2-b_1)(n-b_1)}\frac{1}{\log^2 n}$ \\
%--- Case
$\sim n$ & $0$ & $>0$ &
$\frac{\theta^2+\theta}{n-b_2}\frac{1}{\log n}$ \\
%--- Case
$>1$ & $\sim n$ & $=b_1$ &
$\theta^2 \bO{\frac{n}{\log^4 n}}$ \\
%--- Case
$>1$ & $\sim n$ & $=b_1+const^+$ &
$\theta^2L_1(n-b_2,b_1)\frac{n}{\log n}$ \\
%--- Case
$1$ & $0$ & $\sim n$&
$\theta^2 \bO{\frac{n^2}{\log^3 n}}$\\
%--- Case
$1$ & $>0$ & $\sim n$ &
$\theta^2 \bO{\frac{n^2}{\log^4 n}}$ \\
%--- Case
$1$ & $\sim nu$ & $\sim n(1-u)$ &
$\theta^2 \bO{\frac{1}{\log^2n}}$ \\
%--- Case
$1$ & $\sim n$ & $>1$ &
$\theta^2 \bO{\frac{n}{\log^3 n}}$ \\
%--- Case
$1$ & $\sim n$ & $1$ &
$\theta^2 \bO{\frac{n}{\log^3 n}}$ \\
%--- Case
$> 1$ & $0$ & $\sim n$ &
$\theta^2 \bO{\frac{n^2}{\log^5 n}}$\\
%--- Case
$\sim nu$ & $>0$ & $\sim n(1-u)$ &
$\frac{\theta^2}{(1-u)(b_2-b_1)}\frac{1}{n\log^2 n}$\\
%--- Case
$\sim nu$ & $0$ & $\sim n(1-u)$ &
$\frac{\theta f_1(u)}{n}$\\
%--- Case
$>1$ & $\sim nu$ & $\sim n(1-u)$ &
$\theta^2 \bO{\frac{1}{\log^3 n}}$\\
%--- Case
$\sim nu$ & $\sim n(1-u)$ & $>0$ &
$-\frac{\theta^2 f_1(u)}{n-b_2} \frac{1}{\log n} $\\
%--- Case
$\sim nu_1$ & $\sim nu_2$ & $\sim n(1-u_1-u_2)$ &
$\frac{\theta^2\(f_2(u_1,u_1+u_2)+f_3(u_1,u_1+u_2)\Ind{2u_1+u_2\leq 1} - f_1(u_1)f_1(u_1+u_2)\)}{n^2}$\\
%--- Case
$\sim nu$ & $\sim n(1-2u)$ & $=b_1$ &
$\frac{\theta^2\int_0^\infty\int_0^\infty \frac{e^{-y_1}e^{-y_2}}{y_1\vee y_2}\dif{y_1}\dif{y_2}}{u(1-u)}\frac{1}{n\log n}$\\
%--- Case
$\sim nu$ & $\sim n(1-2u)$ & $=b_1 + const^+$ &
$\frac{\theta^2\int_0^\infty\int_0^\infty \frac{e^{-y_1}e^{-y_2}(y_1+y_2)}{y_1\vee y_2}\dif{y_1}\dif{y_2}}{u(1-u)(n-b_2-b_1)}\frac{1}{n\log^2 n}$\\
\\
\hline
\\
\end{tabular}\par

Also for $I, \widehat I\subset(0,1)$, and $SFS_{n,{I}},SFS_{n,\widehat{I}}$ as defined in Theorem \ref{cvinterval} (V), we have
%\begin{equation*}
%(b_1/n,b_2/n)\to(x_1,x_2)=: I
%\end{equation*}
%and 
%\begin{equation*}
%(\widehat{b_1}/n,\widehat{b_2}/n)\to(\widehat{x_1},\widehat{x_2})=: \widehat{I},
%\end{equation*}
%with $0<x_1\wedge\widehat{x_1}<1$\textcolor{red}{para que es eso, para que los dos sean diferentes de 0y 1?}. Then \textcolor{red}{Reescribir todo lo anterior, según yo, no sirve para nada, nada mas basta le notación $SFS_{n,{I}},SFS_{n,\widehat{I}}$}
\begin{align}\label{eq:COV_SFSIntervalLim}
&\Cov\(SFS_{n,I},SFS_{n,\widehat{I}}\) \to \\
& \theta^2 \int_I \int_{\widehat{I}} f_2(u_1,u_2)+f_3(u_1,u_2)\Ind{u_1+u_2<1}
-f_1(u_1)f_1(u_2)
\dif{u_2}\dif{u_1}
+\theta
\int_{I\cap\widehat{I}}f_1(u)\dif{u}.\nonumber
%+\E[SFS_{I\cap\widehat{I}}]
\end{align}
\end{theorem}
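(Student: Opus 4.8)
The plan is to derive both parts of Theorem \ref{th:AsmpCov} from the exact covariance identity of Corollary \ref{cor:SFS_EvCov},
\begin{align*}
\Cov(SFS_{n,b_1},SFS_{n,b_2})=&\,\theta^2nL_2(n,b_1,b_2)+\theta^2nL_3(n,b_1,b_2)\Ind{b_1+b_2\leq n}\\
&-\theta^2n^2L_1(n,b_1)L_1(n,b_2)+\theta nL_1(n,b)\Ind{b_1=b=b_2},
\end{align*}
by feeding into it the asymptotics of $L_1,L_2,L_3$ from Lemma \ref{le:ASMPL1L2L3} together with the edge-case asymptotics announced in the Remark following it. In this way the entire theorem becomes a bookkeeping of the competing orders of these four summands across the different regimes. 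For the table I would treat the three ``bulk'' contributions $nL_2$, $nL_3\Ind{b_1+b_2\leq n}$ and $n^2L_1L_1$ separately: by \eqref{eq:ASMPL1}--\eqref{eq:ASMPL3} each is, to leading order, $n^{-2}$ times a product of the combinatorial prefactors $(b_i-1)/b_i$ with the functions $f_1,f_2,f_3$ evaluated at the rescaled arguments, and the asymptotics of $f_1,f_2,f_3$ themselves (produced by the proof of Lemma \ref{le:ASMPL1L2L3}) convert each entry into an explicit power of $n$ and $\log n$. Whenever the three gaps $b_1$, $b_2-b_1$, $n-b_2$ live on genuinely different scales, one summand dominates and the entry is read off directly.

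The main obstacle is the set of rows where the bulk terms decay at the \emph{same} order, so that the leading contributions of $nL_2+nL_3$ and of $n^2L_1L_1$ cancel. This occurs precisely along the proportional regime $b_1\sim nu_1$, $b_2\sim nu_2$, whose entry $\theta^2\big(f_2+f_3\Ind{2u_1+u_2\leq1}-f_1f_1\big)/n^2$ only makes sense after the separate $O(n^{-2})$ parts have cancelled; to see what survives one must retain the next-order term in each Laplace/saddle-point expansion of $L_1,L_2,L_3$. The hard part is therefore to push these expansions one order further while keeping uniform control, and to determine which boundary of the integration simplex supplies the surviving contribution in each row -- the near-diagonal behaviour $(u_2-u_1)^{p_1-p_2-1}$ governing $f_2$, or the $p_1\vee p_2$ kink governing $f_3$. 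This is where the $const^+$ rows and the explicit $\int_0^\infty\int_0^\infty$ entries originate.

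For the interval limit \eqref{eq:COV_SFSIntervalLim} I would write $SFS_{n,I}=\sum_{b=\lceil nx\rceil}^{\lfloor ny\rfloor}SFS_{n,b}$, expand the bilinear covariance as a double sum of pointwise covariances, and substitute the exact identity above. The uniform leading-order asymptotics $n^2L_1(n,b)\sim f_1(b/n)$, $n^3L_2\sim f_2$, $n^3L_3\sim f_3$ turn $\theta^2nL_2$, $\theta^2nL_3\Ind{b_1+b_2<n}$ and $-\theta^2n^2L_1L_1$ into $n^{-2}$ times $f_2(b_1/n,b_2/n)$, $f_3\Ind{u_1+u_2<1}$ and $-f_1f_1$ respectively, so the double sum is a two-dimensional Riemann sum of mesh $n^{-2}$ converging to $\theta^2\int_I\int_{\widehat I}(f_2+f_3\Ind{u_1+u_2<1}-f_1f_1)$. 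The remaining diagonal correction $\theta nL_1(n,b)\Ind{b_1=b=b_2}$ survives only on $I\cap\widehat I$ and yields $\theta\int_{I\cap\widehat I}f_1$ by the same one-dimensional Riemann-sum argument, producing the second term of \eqref{eq:COV_SFSIntervalLim}.

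The obstacle in this last step is to make the sum-to-integral passage rigorous against the integrable singularities of the limiting densities: $f_2(u_1,u_2)$ blows up as $u_2\downarrow u_1$, and $f_3(u_1,u_2)$ blows up as $u_1+u_2\uparrow1$. Since $I,\widehat I$ are compactly contained in $(0,1)$ the endpoints $0,1$ are avoided, but when $I$ and $\widehat I$ overlap the diagonal singularity of $f_2$ is genuinely present, and when $x+\widehat x$ is close to $1$ so is the anti-diagonal singularity of $f_3$. I would verify that the relevant exponents exceed $-1$, so these singularities are integrable, and then control the Riemann-sum error by means of the uniform bounds supplied by the $\max$ statements of Lemma \ref{le:ASMPL1L2L3}, letting a dominated-convergence argument send the error to zero after summation.
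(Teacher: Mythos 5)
Your overall route coincides with the paper's (whose proof of this theorem is a single sentence): substitute the asymptotics of $L_1,L_2,L_3$ from Lemma \ref{le:ASMPL1L2L3}, together with the edge-case extensions announced in the Remark, into the exact covariance identity of Corollary \ref{cor:SFS_EvCov}, and obtain the interval limit \eqref{eq:COV_SFSIntervalLim} by a Riemann-sum passage; your handling of the diagonal term and of the integrable singularities of $f_2$ and $f_3$ in that last step is sound.

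There is, however, a genuine flaw in your execution plan: your diagnosis of where the delicate cancellation occurs is inverted. In the proportional regime $b_1\sim nu_1$, $b_2\sim n(u_1+u_2)$ there is no cancellation to overcome: after multiplication by $n^2$, the three bulk terms $\theta^2 nL_2$, $\theta^2 nL_3\Ind{b_1+b_2\le n}$ and $\theta^2 n^2L_1L_1$ converge separately to $\theta^2 f_2$, $\theta^2 f_3\Ind{2u_1+u_2\le 1}$ and $\theta^2 f_1 f_1$, and the table entry is precisely this (generically nonzero) combination — were it identically zero, the first integral in \eqref{eq:COV_SFSIntervalLim} would be superfluous. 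So the uniform $1+\lo{1}$ statements of Lemma \ref{le:ASMPL1L2L3} already suffice there, and no next-order Laplace expansion is required. Conversely, your claim that when the gaps $b_1$, $b_2-b_1$, $n-b_2$ live on genuinely different scales "one summand dominates and the entry is read off directly" fails exactly on the rows carrying $\bO{\cdot}$ entries. Take the first row, $b_1,b_2$ fixed and $n-b_2\sim n$: each of the three bulk terms is of order $n^2/\(b_1(b_1-1)b_2(b_2-1)\log^4 n\)$, none dominates, and they cancel at leading order to leave the stated $\bO{n^2/\log^5 n}$. Establishing such rows requires controlling the relative errors in the asymptotics of $L_1,L_2,L_3$ to order $\bO{1/\log n}$ — i.e.\ precisely the refined expansions you reserve for the proportional regime, where they are unnecessary. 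The paper's own proof of Corollary \ref{lawlargenumbers} is evidence of this: for small $b$ it shows $L_3(n,b,b)=nL_1(n,b)^2(1+\lo{1})+\lo{n^{-1}\E[SFS_{n,b}]^2}$, a cancellation between $nL_3$ and $n^2L_1^2$ in a regime where all three gaps are on different scales. As written, your plan would output $\bO{n^2/\log^4 n}$ for those rows — the size of the individual terms — rather than the theorem's sharper bound.
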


These approximations follow from the asymptotics for $L_1,L_2,$ and $L_3$ substituted in the covariance formula given in Corollary $\ref{cor:SFS_EvCov}$. For the sake of simplicity we do not provide the explicit computations. We only treat the case where the expected value $\mathbb E[SFS_{n,b}]$ diverges, then an application of Chebyshev's inequality allows us to prove the following weak law of large numbers with $L^2$-convergence, which generalizes and strengthens results on the Bolthausen-Sznitman coalescent derived in $\cite{DK}$.

\begin{corollary}
\label{lawlargenumbers}
Suppose that $b/n\to 0$ in such a way that $\E[SFS_{n,b}]\to\infty$, or equivalently that $b=\lo{\sqrt{n}/\log n}$. Then we have the
following $L^2$-convergence:
\begin{equation*}
\frac{SFS_{n,b}}{\mathbb E[SFS_{n,b}]} \to \theta .
\end{equation*}
\end{corollary}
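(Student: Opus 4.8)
The plan is to prove the stated limit by the second moment method, as announced in the text. Since the normalized variable $SFS_{n,b}/\E[SFS_{n,b}]$ has mean $1$, establishing its $L^2$-convergence to its mean is equivalent to showing that its variance tends to $0$, i.e.
$$\frac{\V(SFS_{n,b})}{\E[SFS_{n,b}]^2}\longrightarrow 0 .$$
First I would exploit the conditional structure of the infinite sites model: given the branch length $\ell_{n,b}$, $SFS_{n,b}$ is Poisson with mean $\theta\ell_{n,b}$. The law of total variance then yields the clean splitting
$$\V(SFS_{n,b})=\theta^2\,\V(\ell_{n,b})+\theta\,\E[\ell_{n,b}],$$
which is exactly the covariance formula of Corollary \ref{cor:SFS_EvCov} read at $b_1=b_2=b$. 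Dividing by $\E[SFS_{n,b}]^2=\theta^2\E[\ell_{n,b}]^2$ decouples the problem into a Poisson part and a branch-length part,
$$\frac{\V(SFS_{n,b})}{\E[SFS_{n,b}]^2}=\frac{\V(\ell_{n,b})}{\E[\ell_{n,b}]^2}+\frac{1}{\E[SFS_{n,b}]}.$$

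The Poisson part is handled by the hypothesis itself: the assumption is precisely that $\E[SFS_{n,b}]\to\infty$, equivalent by Theorem \ref{cor:ASMEV_SFSI}(i)--(ii) to $b=\lo{\sqrt n/\log n}$, so $1/\E[SFS_{n,b}]\to0$. Everything thus reduces to the squared coefficient of variation of the branch length, which I must show vanishes, $\V(\ell_{n,b})=\lo{\E[\ell_{n,b}]^2}$, uniformly over the admissible range. Using Theorem \ref{th:Elnb} I would write $\E[\ell_{n,b}]=nL_1(n,b)$ and $\V(\ell_{n,b})=nL_2(n,b,b)+nL_3(n,b,b)\Ind{2b\le n}-n^2L_1(n,b)^2$, so the target becomes
$$nL_2(n,b,b)+nL_3(n,b,b)\Ind{2b\le n}-n^2L_1(n,b)^2=\lo{n^2L_1(n,b)^2}.$$
I would establish this by feeding in the diagonal asymptotics of $L_1,L_2,L_3$ produced by the Stirling-type method of Lemma \ref{le:ASMPL1L2L3} together with its extensions (the cases $b_1=b_2$ and $b_1=1$ flagged in the Remark and quantified in the diagonal rows of Theorem \ref{th:AsmpCov}), treating $b=1$ and $2\le b=\lo{\sqrt n/\log n}$ separately. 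For $b=1$ one has $\E[SFS_{n,1}]\asymp\theta n/\log n$ while $\V(SFS_{n,1})=\theta^2\bO{n^2/\log^3 n}$, so the ratio is $\bO{1/\log n}$; for $b\ge2$ the leading orders of $nL_2$, $nL_3$ and $n^2L_1^2$ coincide and cancel, and the surviving remainder is smaller by a factor $\bO{1/\log n}$ relative to $\E[\ell_{n,b}]^2$ once $b=\lo{\sqrt n/\log n}$ is used to absorb the polynomial-in-$b$ prefactors. Combining the two displayed bounds and invoking Chebyshev's inequality then completes the argument.

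The main obstacle is this cancellation step. The integrands defining $L_2(n,b,b)$ and $L_3(n,b,b)$ carry near-diagonal singularities — the factors $\Gamma(p_1-p_2)$ and $(p_1\vee p_2)$, combined with $\Gamma$-ratios that grow through Stirling — so the leading asymptotics of $nL_2$, $nL_3$ and $n^2L_1^2$ must each be computed one order beyond the naive one and matched \emph{exactly}, and this matching must be shown to hold uniformly as $b$ ranges over the whole window $b=\lo{\sqrt n/\log n}$, where both the $\log(n/b)$ factors and the powers of $b$ vary. The delicate point is to verify that no lower-order term survives at the scale $\E[\ell_{n,b}]^2$ and that the error terms stay $\lo{\E[\ell_{n,b}]^2}$ throughout; this is exactly the regime in which the condition $b=\lo{\sqrt n/\log n}$ is sharp, since it is precisely what simultaneously drives the Poisson term and the branch-length coefficient of variation to zero.
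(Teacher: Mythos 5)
Your reduction is exactly the paper's: by Chebyshev everything comes down to $\V(SFS_{n,b})=\lo{\E[SFS_{n,b}]^2}$, the Poisson contribution $1/\E[SFS_{n,b}]$ vanishes by hypothesis, and the whole problem is the branch-length estimate $nL_2(n,b,b)+nL_3(n,b,b)-n^2L_1(n,b)^2=\lo{n^2L_1(n,b)^2}$, uniformly over the window. The gap is that you never prove this estimate. You defer it to the diagonal rows of Theorem \ref{th:AsmpCov} and to the extensions of Lemma \ref{le:ASMPL1L2L3} flagged in the Remark; but those are precisely the statements whose proofs the paper omits (Theorem \ref{th:AsmpCov} is stated without computations, and the only second-moment bound the paper proves in detail is this very corollary), so the appeal is essentially circular. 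It is also quantitatively inadequate: the diagonal entry for $b_1=b_2=b>1$ reads $\theta^2\bO{n^2/\log^5 n}$ with no $b$-dependence, while $\E[SFS_{n,b}]^2\asymp\theta^2 n^2/(b^4\log^4(n/b))$; for $b$ of order $n^{1/4}$, say, the quoted bound is much larger than $\E[SFS_{n,b}]^2$, so citing it cannot close the argument over the whole range $b=\lo{\sqrt n/\log n}$. Finally, your structural picture is wrong in one place: on the diagonal the factor $\Gamma(b_2-b_1+p_1-p_2)/\Gamma(b_2-b_1+1)=\Gamma(p_1-p_2)$ cancels against the $\Gamma(p_1-p_2)$ in the denominator of $L_2$, and one gets $L_2(n,b,b)=\bO{L_1(n,b)}$; hence $nL_2$ is of the strictly lower order $\E[SFS_{n,b}]$ and participates in no cancellation — the only delicate term is $nL_3-n^2L_1^2$.

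The paper settles exactly this point with upper bounds rather than matched expansions, which is why no second-order computation is needed: since only an upper bound on the variance matters, it suffices to show (i) $L_2(n,b,b)\le\sup_{1\le x\le y\le 2}\bigl(\Gamma(y)/\Gamma(x)\bigr)\,L_1(n,b)$, which follows from monotonicity of the Gamma function alone, and (ii) after splitting $[0,1]^2$ into $\{p_1+p_2>\eta\}$ and its complement with $1<\eta<2$, the uniform Stirling equivalence
\begin{equation*}
\frac{\Gamma(n-2b+p_1+p_2)}{\Gamma(n-2b+1)}\sim n\,\frac{\Gamma(n-b+p_1)}{\Gamma(n-b+1)}\,\frac{\Gamma(n-b+p_2)}{\Gamma(n-b+1)},
\end{equation*}
valid because $b=\lo{n}$, factorizes the $L_3$-integrand and bounds the main region by $\frac{n}{\eta-1}\sup_{\eta\le x\le 2}\Gamma(x)^{-1}\,L_1(n,b)^2$, while the corner region is $\lo{n/(b^4\log^4 n)}=\lo{n^{-1}\E[SFS_{n,b}]^2}$; letting $\eta\to 2$ gives $nL_3(n,b,b)\le(1+\lo{1})\,n^2L_1(n,b)^2+\lo{\E[SFS_{n,b}]^2}$, and the corollary follows. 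If you insist on your route through the limit functions, the concrete missing ingredients are the asymptotic identity $f_3(u,u)\sim f_1(u)^2$ as $u\downarrow 0$ (provable by the substitution $p_i=1-y_i/\log(1/u)$ used for \eqref{f1at0}), a diagonal bound for $L_2$ of the type above, and the uniformity of the errors in Lemma \ref{le:ASMPL1L2L3}; as written, none of these is supplied, so the proof is incomplete at its decisive step.
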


In view of Theorem \ref{cor:ASMEV_SFSI}
this means that
for $b=1$
\begin{equation*}
\frac{\log n}{n}\ SFS_{n,1} \to \theta,
\end{equation*}
and for $b \ge 2$, $b=\lo{\sqrt{n}/\log n}$
\begin{equation*}
\frac {b(b-1)\log^2\(n/b\)}{n}\ SFS_{n,b} \to \theta.
\end{equation*}

%----------------------------------------------------------------------%
%------------------------- Branch Length Distribution -----------------%
%----------------------------------------------------------------------%

\section{Distribution of the Family-Sized Branch Lengths} \label{sec:BLdistribution}

In this section we 
discuss the particular case of $\ell_{n,b}$ when $b> n/2$.
In this case we are able to provide an explicit formula for the distribution function of the length of the coalescent of order $b$.
This leads to convergence in law results, but also to the law of $SFS_{n,b}$.
 Observe that in this case, for all $t\geq0$, $C_{n,b}(t)\in\{0,1\}$
 %=\Ind{C_{n,b}(t)=1}$  
and $\ell_{n,b}$ is just the time during which the block of size $b$ survives before 
coalescing with other blocks (if it ever exists, otherwise obviously $\ell_{n,b}=0$). 
We first find an expression for the distribution function of $\ell_{n,b}$.

%------------------------- Marginal Distribution
\begin{theorem}\label{Prop:Llnb}
Suppose that ${n}/{2}< b<n$.
For any $s\geq0$,
\begin{equation}\label{eq:Llnb}
\P(\ell_{n,b}>s)
=\frac{n}{(n-b)b(b-1)}\int_0^1 \frac{\Psi(b-p)-\Psi(1-p)}{Be(n-b,e^{-s}p)Be(b-1,1-p)}
\dif{p}.
\end{equation}
\end{theorem}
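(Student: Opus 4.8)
The plan is to deduce \eqref{eq:Llnb} from the law of $m(T_2)-M(T_1)$ computed in \eqref{eq:lawmT-MT}, after reducing the lifetime of the size-$b$ block to a birth--death pair governed by two independent random recursive trees, with the combinatorial prefactor $n/b$ produced by a size-biasing argument.

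\emph{Step 1 (size-biasing to extract $n/b$).} Since $b>n/2$, at every time $t$ there is at most one block of size $b$, so $C_{n,b}(t)\in\{0,1\}$ and $\ell_{n,b}$ is exactly the lifetime of this unique block (equal to $0$ if it never appears). Let $N_s$ be the number of labels belonging to a size-$b$ block whose lifetime exceeds $s$; then $N_s=b\,\Ind{\{\ell_{n,b}>s\}}$, so $\E[N_s]=b\,\P(\ell_{n,b}>s)$. Because the $n$-coalescent is exchangeable, every label has the same probability of lying in such a block, and hence $\E[N_s]=n\,\P(A_s)$, where $A_s$ is the event that label $1$ lies in a size-$b$ block of lifetime $>s$. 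This gives $\P(\ell_{n,b}>s)=\tfrac{n}{b}\,\P(A_s)$ and reduces the claim to computing $\P(A_s)$ for the single label $1$, which in the RRT construction is the root of $T$.

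\emph{Step 2 (birth and death of the root block).} Recall that the block of the root at time $t$ consists of the root together with the full subtrees hanging from exactly those children of the root whose exponential edge is smaller than $t$ (once such a child-edge falls below $t$, its entire subtree is absorbed, irrespective of the internal edges). Order the children of the root by increasing edge value and let $\sigma_{(1)},\sigma_{(2)},\dots$ be the corresponding subtree sizes. The root block equals $b$ on a nonempty time interval iff some prefix of these sizes sums to $b-1$; if so, writing $A$ for that set of absorbed children, the block is born at $t_{\mathrm{birth}}=\max_{i\in A}E_{c_i}$ and dies at $t_{\mathrm{death}}=\min_{i\notin A}E_{c_i}$, and $\ell_{n,b}=(t_{\mathrm{death}}-t_{\mathrm{birth}})^+$. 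Let $T_1$ be the block subtree (the root together with the absorbed subtrees), a tree on $b$ labels whose root-edges are exactly $\{E_{c_i}:i\in A\}$, so that $t_{\mathrm{birth}}=M(T_1)$; and let $T_2$ be the tree obtained by contracting this block to a single super-root to which the remaining child-subtrees are attached, a tree on $(n-b)+1$ labels whose root-edges are $\{E_{c_i}:i\notin A\}$, so that $t_{\mathrm{death}}=m(T_2)$.

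\emph{Step 3 (decomposition and conclusion).} The key point, and the main obstacle, is to show that conditioning a RRT on $n$ labels on the root block reaching size $b$ splits it into \emph{independent} random recursive trees $T_1$ on $b$ labels and $T_2$ on $n-b+1$ labels, with the incident root-edges being the relevant i.i.d.\ exponentials; this requires a careful analysis of the joint law of the children's subtree sizes and edge values and of the ``exact prefix sum $=b-1$'' event, summing over the admissible absorbed sets $A$ and verifying that the induced shapes are genuine uniform recursive trees of the stated sizes. Granting this, $(t_{\mathrm{birth}},t_{\mathrm{death}})\overset{d}{=}(M(T_1),m(T_2))$ for independent $T_1,T_2$, so $\P(A_s)=\P\big(m(T_2)-M(T_1)>s\big)$. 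Applying \eqref{eq:lawmT-MT} with $n_1=b$ and $n_2=n-b+1$ gives $\P(A_s)=\tfrac{1}{(b-1)(n-b)}\int_0^1 \tfrac{\Psi(b-p)-\Psi(1-p)}{Be(n-b,e^{-s}p)Be(b-1,1-p)}\dif{p}$, and multiplying by the prefactor $n/b$ from Step 1 produces exactly $\tfrac{n}{(n-b)b(b-1)}$ times the same integral, which is \eqref{eq:Llnb}.
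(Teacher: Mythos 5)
Your Steps 1 and 2 are sound: the size-biasing identity $\P(\ell_{n,b}>s)=\tfrac{n}{b}\,\P(A_s)$ and the description of the root block (whole child-subtrees absorbed in the order of the root edges) are both correct, and they run parallel to the paper's opening move $\P(\ell_{n,b}>s)=\binom{n}{b}\P(\ell_{\{1,\dots,b\}}>s)$. The genuine gap is Step 3, which is the heart of the theorem and which you explicitly do not prove (``Granting this\dots''). Worse, the statement you propose to grant is false as formulated: on the event that the root block reaches size $b$, the root edges of $T_1$ (the absorbed children) are forced to lie entirely below the root edges of $T_2$ (the surviving children), so the conditional law of the pair given this event cannot be that of two independent RRTs carrying unconstrained i.i.d.\ exponential edges. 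And even granting your claim, your own logic would only yield $\P(A_s)=\P(\text{reach }b)\cdot\P(m(T_2)-M(T_1)>s)$, whereas you write $\P(A_s)=\P(m(T_2)-M(T_1)>s)$, silently dropping the factor $\P(\text{reach }b)<1$. These two defects happen to cancel: the unconditional identity $\P(A_s)=\P(m(T_2)-M(T_1)>s)$ is true, precisely because for independent trees the event $\{m(T_2)>M(T_1)\}$ has probability equal to the probability that the root block ever reaches size $b$. But nothing in your proposal establishes this identity, and it is exactly the nontrivial content of the result.

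The clean way to close the gap is to let exchangeability do the combinatorial work \emph{before} touching the tree, as the paper does. Reduce first to the specific block $\{1,\dots,b\}$, so that $\P(\ell_{n,b}>s)=\binom{n}{b}\P(\ell_{\{1,\dots,b\}}>s)$; then the relevant event is the purely structural event $\mathcal{G}$ that no node $\{j\}$ with $j>b$ attaches to any of $\{2\},\dots,\{b\}$. On $\mathcal{G}$ the lifetime equals $(m(T_2)-M(T_1))\vee 0$, and conditionally on $\mathcal{G}$ the trees $T_1$ and $T_2$ genuinely are independent RRTs with i.i.d.\ edges: the conditioning only restricts each arriving node's set of allowed attachment sites, so this is immediate from the sequential construction, and no edge-ordering constraint enters the conditioning (this is what makes your ``careful analysis over admissible absorbed sets'' unnecessary --- the label set of the block is fixed in advance). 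Finally $\P(\mathcal{G})=\prod_{i=0}^{n-b-1}\frac{1+i}{b+i}$, and $\binom{n}{b}\P(\mathcal{G})=\frac{n}{b}$ recovers your prefactor, after which Lemma \ref{le:lawsmTMT} gives \eqref{eq:Llnb} exactly as in your final computation.
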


From the derived distribution of $\ell_{n,b}$ in Theorem $\ref{Prop:Llnb}$ we obtain that, conditioned on $\ell_{n,b}>0$, the variable $\(\log n\)\ell_{n,b}$  has a limiting distribution.

\begin{corollary}\label{cor:ConvDistlnb}
Suppose that $b/n\to u\in[1/2,1)$ as $n\to\infty$, then letting $\alpha=\log(1-u)-\log u$, we have
\begin{equation*}
\frac{n}{\log n} \P(\ell_{n,b}>0) \to \frac{G(\alpha)}{u(1-u)}
\end{equation*}
where 
$$G(x)=\int_0^1 e^{px}
\frac{\sin \pi p}{\pi }\dif{p}=\frac{1+e^{x}}{\pi^2+x^2}.$$
Furthermore, 
\begin{equation*}
\P(\(\log n\)\ell_{n,b}>s \vert\ell_{n,b}>0) \to 
\frac{G(\alpha-s)}{G(\alpha)}.
\end{equation*}
\end{corollary}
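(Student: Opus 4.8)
The plan is to read off the asymptotics directly from the exact formula \eqref{eq:Llnb} in Theorem \ref{Prop:Llnb}, after replacing $s$ by $s/\log n$ so as to compute $\P((\log n)\ell_{n,b}>s)=\P(\ell_{n,b}>s/\log n)$. Writing $q=e^{-s/\log n}$, the integrand of \eqref{eq:Llnb} splits into a digamma term $\Psi(b-p)-\Psi(1-p)$ and the two reciprocal Beta functions, and the whole strategy is to identify the pointwise limit (in $p$) of the suitably rescaled integrand and then pass the limit through the integral by dominated convergence.

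First I would expand the Beta functions as ratios of Gamma functions and apply Stirling's formula. Since $\tfrac{1}{Be(n-b,qp)}=\tfrac{\Gamma(n-b+qp)}{\Gamma(n-b)\Gamma(qp)}\sim \tfrac{(n-b)^{qp}}{\Gamma(p)}$ and $\tfrac{1}{Be(b-1,1-p)}=\tfrac{\Gamma(b-p)}{\Gamma(b-1)\Gamma(1-p)}\sim\tfrac{b^{1-p}}{\Gamma(1-p)}$, the only delicate point is the exponent $(n-b)^{qp}$, where both base and exponent carry $n$-dependence. Here I would use $q=e^{-s/\log n}=1-\tfrac{s}{\log n}+O(\log^{-2}n)$ together with $\log(n-b)=\log n+\log(1-u)+o(1)$ to obtain $qp\,\log(n-b)=p\log n+p\log(1-u)-ps+o(1)$, hence $(n-b)^{qp}\sim n^p(1-u)^p e^{-ps}$. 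This is precisely the step that converts the $\log n$-rescaling of the branch length into the exponential factor $e^{-ps}$ that will appear in $G$. Combined with $b^{1-p}\sim (nu)^{1-p}$, the powers of $n$ collapse to a single factor $n$, independent of $p$.

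Next I would collect the remaining factors: the digamma difference obeys $\Psi(b-p)-\Psi(1-p)\sim\log b\sim\log n$, while the prefactor obeys $\tfrac{n}{(n-b)b(b-1)}\sim\tfrac{1}{n^2u^2(1-u)}$. Assembling these gives
\begin{equation*}
\P(\ell_{n,b}>s/\log n)\sim\frac{\log n}{n\,u^2(1-u)}\int_0^1\frac{(1-u)^p\,u^{1-p}\,e^{-ps}}{\Gamma(p)\Gamma(1-p)}\dif{p}.
\end{equation*}
Applying Euler's reflection formula $\Gamma(p)\Gamma(1-p)=\pi/\sin(\pi p)$ and factoring out $u$ turns the integral into $u\int_0^1 e^{p(\alpha-s)}\tfrac{\sin\pi p}{\pi}\dif{p}=u\,G(\alpha-s)$, with $\alpha=\log(1-u)-\log u$. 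Therefore $\tfrac{n}{\log n}\P(\ell_{n,b}>s/\log n)\to\tfrac{G(\alpha-s)}{u(1-u)}$. Taking $s=0$ yields the first assertion (the closed form $G(x)=(1+e^x)/(\pi^2+x^2)$ follows from a routine integration of $e^{px}\sin(\pi p)$), and dividing the $s>0$ limit by the $s=0$ limit yields the conditional limit $G(\alpha-s)/G(\alpha)$.

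The main obstacle is rigor rather than the computation: the Stirling and digamma estimates above are pointwise in $p$, so I must exhibit an $n$-independent integrable dominating function on $(0,1)$ to justify interchanging limit and integral. The endpoints deserve care — $1/\Gamma(p)\to 0$ as $p\to0$ and $1/\Gamma(1-p)\to0$ as $p\to1$ make the limiting integrand vanish there, which is favourable, but one still needs uniform control of the ratios $\Gamma(n-b+qp)/\Gamma(n-b)$ and $\Gamma(b-p)/\Gamma(b-1)$ and of $\Psi(b-p)-\Psi(1-p)$ across the whole interval, uniformly as $b/n$ ranges in a neighbourhood of $u$. I expect this uniformity, together with the bookkeeping of the $n$-dependent exponent $(n-b)^{qp}$, to be where the real work lies; everything else reduces to the reflection formula and the elementary identity for $G$.
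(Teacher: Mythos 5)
Your proposal is correct and follows essentially the same route as the paper's own proof: both read the limit off the exact formula \eqref{eq:Llnb} of Theorem \ref{Prop:Llnb} by combining Stirling's approximation for the Gamma ratios, the digamma expansion $\Psi(b-p)-\Psi(1-p)=\log b + \bO{1}$ (plus an integrable $\frac{1}{1-p}$ term), Euler's reflection formula, and the key observation that $(n-b)^{pe^{-s/\log n}}\sim n^p(1-u)^pe^{-ps}$. The only difference is organizational — you treat $s=0$ and $s>0$ in one rescaled computation where the paper does them separately — and your flagged concern about uniform domination is handled in the paper by the uniformity of the digamma expansion in $p$.
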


%------------------------- Joint Distribution
We now give the joint distribution of the branch lengths for large families, i.e. the joint distribution of the vector 
$(\ell_{n,b})_{b>n/2}$. For this we introduce the following events: for any collection of integers $\mathbf{b}=(b_1,\cdots,b_m)$ such that 
$n/2< b_1<b_2<\cdots<b_m<n$, and any collection of nonnegative numbers $\mathbf{s}=(s_1,\cdots,s_m)$, define the event
\begin{equation*}
\Lambda_{\mathbf{b},\mathbf{s}}\coloneqq \(\bigcap_{i=1}^m \{\ell_{b_i}>s_i\}\) \bigcap 
\(\bigcap_{\substack{b>b_1\\b\not\in\mathbf{b}}}\{\ell_b=0\}\),
\end{equation*}
that is, the event that a block of size $b_1$ exists for a time larger than $s_1$, that this block then merges with some other blocks of total size exactly $b_2-b_1$, that this new block exists for a time larger than $s_2$, and so on, until the last merge of the growing block occurs with the remaining blocks of total size exactly $n-b_m$.

\begin{theorem}\label{le:JointLlnb}
For $\mathbf{b}=\(b_1,\cdots,b_m\)$ and $\mathbf{s}=(s_1,\cdots,s_m)$ as above, we have
\begin{equation}\label{eq:JointLlnb}
\P\(\Lambda_{\mathbf{b},\mathbf{s}}\)=\frac{n}{b_1(b_2-b_1)\cdots(n-b_m)}\frac{\exp\{-\IP{(m:1)}{\mathbf{s}}\}}{m!}
\int_0^1p^m\frac{\Psi(b_1-p)-\Psi(1-p)}{Be(b_1-1,1-p)}
\dif{p}
\end{equation}
and
\begin{align}\label{eq:JointLlnb2}
&\hspace{10pt}\P\(\Lambda_{\mathbf{b},\mathbf{s}},\bigcap_{n/2<b<b_1}\{\ell_{n,b}=0\}\)\\
&=\frac{n}{(b_2-b_1)\cdots(n-b_m)}\frac{\exp\{-\IP{(m:1)}{\mathbf{s}}\}}{m!}\times\nonumber\\
&\hspace{10pt}\(\int_0^1\frac{p^m}{b_1}\frac{\Psi(b_1-p)-\Psi(1-p)}{Be(b_1-1,1-p)}-
\frac{p^{m+1}}{m+1}\sum_{n/2<b<b_1} \frac{1}{b(b_1-b)}\frac{\Psi(b-p)-\Psi(1-p)}{Be(b-1,1-p)} \dif{p}\),\nonumber
\end{align}
where
\begin{equation*}
(m:1)\coloneqq(m,m-1,\dots,1).
\end{equation*}
and $\IP{\cdot}{\cdot}$ is the usual inner product in Euclidean space.
\end{theorem}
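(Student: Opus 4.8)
The plan is to work entirely within the RRT construction and to read off the event $\Lambda_{\mathbf b,\mathbf s}$ as a statement about the growth of the unique block of size exceeding $n/2$. Since $b_1>n/2$, at every time there is at most one block with more than $n/2$ labels; in the tree this ``giant'' block is always of the form $\mathrm{block}(v)=\{v\}\cup\bigcup\{\mathrm{subtree}(c):c\text{ a child of }v,\ E_c<t\}$ for the block-top $v$ whose path to the root carries only uncut edges, and as $t$ increases the giant only grows, absorbing whole subtrees one trigger-edge at a time while its top climbs toward the root. The event $\Lambda_{\mathbf b,\mathbf s}$ says precisely that the size of this giant takes the values $b_1<b_2<\dots<b_m$ (and no other value above $b_1$), spending a time larger than $s_i$ at size $b_i$ before the $i$-th absorption. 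First I would fix the nested combinatorial skeleton realizing this trajectory: a core subtree $T_1$ on $b_1$ of the labels, then disjoint groups of labels of sizes $b_2-b_1,\dots,n-b_m$ absorbed in this order, and then sum and integrate over all such skeletons.

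Next I would compute the probability of a fixed skeleton through the sequential (node-by-node) construction, exactly as in the proof of Lemma \ref{le:lawsmTMT}. The block of size $b_1$ is born when $T_1$ has fully coalesced into its root, i.e.\ at time $M(T_1)$, whose law is the one already used for \eqref{eq:lawMT}; this is the source of the factor $\frac{\Psi(b_1-p)-\Psi(1-p)}{Be(b_1-1,1-p)}$ after the change of variable $p=e^{-x}$, just as in \eqref{eq:lawmT-MT} and in Theorem \ref{Prop:Llnb}. Each subsequent size $b_i$ persists until the next triggering edge is crossed; writing these $m$ trigger values as order statistics and imposing $\ell_{b_i}>s_i$ produces the tail factor. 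The key point is that when the giant has size $b_i$ there remain $m-i+1$ future absorptions ``competing'' to occur, so the persistence tail at size $b_i$ contributes $e^{-(m-i+1)s_i}$; multiplying these across $i$ gives exactly $\exp\{-\IP{(m:1)}{\mathbf s}\}$.

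The rational prefactor and the $p^m/m!$ then come from the bookkeeping of labels and of the ordering of the $m$ trigger values. Each absorbed group of size $b_{i+1}-b_i$ is the subtree of a node attaching to the growing giant, and the RRT attachment probabilities contribute the reciprocal group sizes $\frac1{b_1},\frac1{b_2-b_1},\dots,\frac1{n-b_m}$ together with the global factor $n$; the $m$ trigger variables being simultaneously below the relevant threshold, in a prescribed order, yields $p^m/m!$ upon integrating their joint density against the density of $M(T_1)$ in the variable $p\in(0,1)$. Assembling these pieces gives \eqref{eq:JointLlnb}.

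I expect the main obstacle to be precisely this bookkeeping: one must verify that the successive absorptions, together with the climbing of the giant's top up the tree, behave like an exchangeable sequence whose ordered trigger-times decouple into the clean product and exponential structure above, and that the label count collapses to the stated rational factor with no stray constants. Once \eqref{eq:JointLlnb} is established, \eqref{eq:JointLlnb2} follows by a first-passage decomposition according to the first block-size to exceed $n/2$: either that size is already $b_1$ (the event we want) or it is some $b\in(n/2,b_1)$ which is then absorbed into the size-$b_1$ block. The latter alternative is one extra absorption, contributing an additional trigger variable (hence $p^{m+1}/(m+1)$ in place of $p^m/m!$) and an extra pair of group sizes $b$ and $b_1-b$ (hence the weight $\frac1{b(b_1-b)}$ summed over $n/2<b<b_1$); subtracting it from \eqref{eq:JointLlnb} produces \eqref{eq:JointLlnb2}.
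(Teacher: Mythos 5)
For \eqref{eq:JointLlnb} your route is exactly the paper's: exchangeability fixes the labels; the RRT structural probability of the nested skeleton (each group of size $b_{i+1}-b_i$ forming a subtree whose root attaches to $\{1\}$) collapses against the multinomial coefficient to the factor $n/\bigl(b_1(b_2-b_1)\cdots(n-b_m)\bigr)$; the law of $M(\restr{T}{b_1})$ from Lemma \ref{le:lawsmTMT} supplies the digamma--Beta integrand after the substitution $p=e^{-t}$; and the memoryless ordering of the $m$ trigger exponentials above $M(\restr{T}{b_1})$ yields $p^m/m!$ together with $\exp\{-\IP{(m:1)}{\mathbf{s}}\}$, which is precisely the paper's nested-integral computation.

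The step that would fail as written is your passage to \eqref{eq:JointLlnb2}. Decomposing by the \emph{first} block size to exceed $n/2$ does not make each alternative ``one extra absorption'': on the event that the first giant size is $b<b_1$, the giant may still pass through further sizes in $(b,b_1)$ before reaching $b_1$, so that event is a union over arbitrarily many intermediate absorptions, and its probability is \emph{not} the term $\frac{1}{b(b_1-b)}\frac{p^{m+1}}{(m+1)!}\cdots$ that you assign to it. The decomposition that works---and the one the paper uses, phrased recursively---is by the \emph{largest} size in $(n/2,b_1)$ that is attained, equivalently the size $b$ from which the giant jumps \emph{directly} to $b_1$: these events are exactly $\Lambda_{(b,b_1,\dots,b_m),(0,s_1,\dots,s_m)}$, they are pairwise disjoint, their union over $n/2<b<b_1$ is $\Lambda_{\mathbf{b},\mathbf{s}}$ deprived of the event in \eqref{eq:JointLlnb2}, and each one is evaluated by \eqref{eq:JointLlnb} with $m+1$ entries; the prepended duration $0$ leaves the inner product unchanged, $\IP{(m+1:1)}{(0,\mathbf{s})}=\IP{(m:1)}{\mathbf{s}}$, and produces the factors $\frac{p^{m+1}}{(m+1)!}$ and $\frac{1}{b(b_1-b)}$ in the subtracted sum. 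Since both partitions cover the same set, your total coincidentally comes out right, but the identification of the individual terms is wrong; with this relabelling your argument coincides with the paper's proof.
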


By conditioning on $(\ell_{n,b})_{b>n/2}$ and using equation $\eqref{eq:JointLlnb2}$ one can obtain a sampling formula for the vector 
$(SFS_{n,b})_{b>n/2}$, although the computations are rather convoluted and we do not present them here. 

%-------------------------------------------------------------------%
%--------------- The approximation -------------------------%

\section{The approximations}\label{sec:approximations}

Here we derive the approximations given above in the Introduction. From Stirling's approximation we have the well-known formula $\Gamma(m+c)/\Gamma(m) \approx m^c$. Its application requires some care, since we shall apply this approximation also for small values of $m$ down to  $m=1$.  It is known and easily confirmed by computer that the approximation is particularly accurate within the range $0\le c \le 1$. Thus we use for $p\in(0,1)$ and $b \ge 2$ the approximations
\begin{equation*} \frac{\Gamma(b-p)}{\Gamma(b+1)}= \frac 1{b(b-1)}\frac{\Gamma (b-1+(1-p))}{\Gamma(b-1)} \approx \frac1{b(b-1)}(b-1)^{1-p} = \frac{(b-1)^{-p}} b
\end{equation*}
and
\begin{equation*} \frac{\Gamma(n-b+p)}{\Gamma(n-b+1)} = \frac 1{n-b} \frac{\Gamma(n-b+p)}{\Gamma(n-b)} \approx (n-b)^{p-1}.\end{equation*}
Also by Euler's reflection formula $\Gamma(1-p)\Gamma(1+p)= \pi p/\sin(\pi p)$.  Inserting these formulas into the expression \eqref{eq:Elnb} for the expected SFS we obtain
\begin{align*}
\mathbb E[SFS_{n,b}] &\approx \theta n \frac{b-1}b \int_0^1 (b-1)^{-p-1}(n-b)^{p-1} \frac{\sin (\pi p)}{\pi p} \dif p\\&= \theta \frac{n}{(n-1)^2} \frac {b-1}b f_1\Big(\frac{b-1}{n-1}\Big) .
\end{align*}
It turns out that this approximation overestimates the expected SFS, which can be somewhat counterbalanced by replacing the scaling factor $n/(n-1)^2$ by $1/(n-1)$. This  yields our first approximation \eqref{approx}. 

For the second approximation \eqref{approx2} we apply the expansion
\begin{equation*} \frac{\Gamma(m+c)}{\Gamma(m)} = m^c \Big(1- \frac{c(1-c)}{2m} + O(m^{-2})\Big), \end{equation*}
see \cite{TE51}. Again this approximation is particularly accurate for $0\le c \le 1$ leading for $p\in(0,1)$ and $b \ge 2$ to
\begin{align*}
\frac{\Gamma(b-p)}{\Gamma(b+1)}\frac{\Gamma(n-b+p)}{\Gamma(n-b+1)} &\approx \frac {(b-1)^{-p}}b(n-b)^{p-1} \Big(1- \frac{(1-p)p}{2(b-1)}\Big)\Big(1- \frac{p(1-p)}{2(n-b)}\Big)\\
&\approx  \frac {(b-1)^{-p}}b(n-b)^{p-1} \Big( 1- (n-1)\frac{p(1-p)}{2(b-1)(n-b)}\Big).
\end{align*} 
Using this approximation in the expression for the expected SFS we get for $b \ge 2$
\begin{align*}
\mathbb E[SFS_{n,b}] &\approx \theta n \frac{b-1}b\Bigg( \frac 1{(n-1)^2} f_1\Big(\frac {b-1}{n-1}\Big) \\& \hspace{2cm} \mbox{}- \frac{n-1}2 \int_0^1 (b-1)^{-p-2}(n-b)^{p-2} \frac{\sin (\pi p)}{\pi} (1-p)\, dp \Bigg)\\&=
\theta n \frac{b-1}b\Big( \frac 1{(n-1)^2} f_1\Big(\frac {b-1}{n-1}\Big) - \frac 1{(n-1)^3} g_1\Big(\frac{b-1}{n-1}\Big) \Big)
\end{align*}
with the function $g_1$
%\begin{equation*} g_1(u)=\frac{1}{2}\int_0^1 u^{-p-2}(1-u)^{p-2} \frac{\sin (\pi p)}{\pi} (1-p)\, dp  \end{equation*}
as defined in $\eqref{eq:g1}$.
This integral can be evaluated by elementary means yielding formula \eqref{approx2}.

%----------------------------------------------------------------------%
%------------------------- Proofs Moments -----------------------------%
%----------------------------------------------------------------------%

\section{Proofs of Section $\ref{sec:BLmoments}$}\label{sec:proofsBLmoments}
As in the infinite coalescent case, the proof of Theorem $\ref{th:Elnb}$ begins with the definition \eqref{defellCn}  and by noting that
\begin{equation*}
\E\[\ell_{n,b}\]=\E\[\int_0^\infty C_{n.b}(t) \dif{t}\]=\int_0^\infty \E\[C_{n,b}(t)\]\dif{t},
\end{equation*}
and similarly
\begin{equation*}
\E\[\ell_{n,b_1}\ell_{n,b_2}\]=\int_0^\infty\int_0^\infty \E\[C_{n,b_1}(t_1)C_{n,b_2}(t_2)\]\dif{t_1}\dif{t_2},
\end{equation*}
so it only remains to compute {$\E\[C_{n,b}(t)\]$} and $E\[C_{n,b_1}(t)C_{n,b_2}(t)\]$ in each case and simplify the expressions.

%------------------------- Proof of Expected Value in n-coalescent
\begin{proof}[Proof of Theorem $\ref{th:Elnb}$ (first moment)]
Let $\mathcal{B}$ be the collection of all possible blocks of size $b$ in a partition of $[n]$. Then
\begin{equation*}
\E\[C_{n,b}(t)\]=\E\[\sum_{B\in\mathcal{B}} \Ind{B\in\Pi^n(t)}\]=\sum_{B\in\mathcal{B}}\P\(B\in\Pi^n(t)\),
\end{equation*}
and by exchangeability of $\Pi^n(t)$,
\begin{equation*}
\E\[C_{n,b}(t)\]=\binom{n}{b}\P\(\{1,\cdots,b\}\in\Pi^n(t)\).
\end{equation*}
Thus, using  \eqref{Le:EVPD}, the fact that $\DAF{\Pi^\infty(t)}=:(A_1, A_2,\dots)\overset{d}{=}PD(e^{-t},0)$,
and writing $\Pi^n$ as $\Pi^\infty_{|n}$, we obtain
\begin{align*}
\E[C_{n,b}(t)]&=\binom{n}{b}\E\[\sum_{i=1}^\infty A_i^{b}(1-A_i)^{n-b}\]\\
&=\binom{n}{b}\int_0^1 u^{b-1}(1-u)^{n-b} \frac{u^{-e^{-t}}(1-u)^{e^{-t}-1}}{\Gamma(1-e^{-t})\Gamma(e^{-t})} \dif u\\
&=\frac{n\Gamma(n)}{\Gamma(n-b+1)\Gamma(b+1)}\frac{Be(b-e^{-t},n-b+e^{-t})}{\Gamma(1-e^{-t})\Gamma(1+e^{-t})}
.
\end{align*}
Finally, by changing the variable $p=e^{-t}$, we obtain \eqref{eq:Elnb}.
\end{proof}
%-------------------------

Now we use the random tree construction of the $n$-Bolthausen-Sznitman coalescent in order to compute the second moments
of $\ell_{n,b}$.
%------------------------- Proof of Covariance Matrix in the n-coalescent
\begin{proof}[Proof of Theorem $\ref{th:Elnb}$ (second moments)]
Let $1\leq b_1 \leq b_2 \leq n-1$, and $\mathcal{B}_1,\mathcal{B}_2$ be the collection
of all possible blocks of sizes $b_1$ and $b_2$ respectively in a partition of $[n]$.
Then
\begin{align}\label{eq:Elb1lb2_3}
\E\[\ell_{n,b_1}\ell_{n,b_2}\]
=&\int_0^\infty \int_0^\infty \E\[ C_{n,b_1}(t_1) C_{n,b_2}(t_2) \] \dif{t_2}\dif{t_1} \nonumber\\
=&\int_0^\infty \int_0^\infty \sum_{B_1\in\mathcal{B}_1}\sum_{B_2\in\mathcal{B}_2}  
         \P\(B_1\in\Pi^n(t_1),B_2\in\Pi^n(t_2)\)
  \dif{t_2}\dif{t_1}.
\end{align}
We now compute $\P\(B_1\in\Pi^n(t_1),B_2\in\Pi^n(t_2)\)$ by cases. \par
i) Suppose that {$B_1\cap B_2=\emptyset$}.
By exchangeability we have
\begin{equation*}
\P\(B_1\in\Pi^n(t_1),B_2\in\Pi^n(t_2)\)=\P(\{1,\cdots,b_1\}\in\Pi^n(t_1),
                                                 \{b_1+1,\cdots,b_1+b_2\}\in\Pi^n(t_2))
\end{equation*}
where this probability is of course 0 if $b_1+b_2>n$.
Now suppose that $t_1\leq t_2$.
In terms of the RRT construction of the Bolthausen-Sznitman coalescent, the event 
\begin{equation*}
\{\{1,\cdots,b_1\}\in\Pi^n(t_1),\{b_1+1,\cdots,b_1+b_2\}\in\Pi^n(t_2)\}
\end{equation*}

is characterized by a RRT with exponential edges, say $E_2,\cdots,E_{n}$, constructed as follows: 
for $i\in\{1,\cdots,b_1-1\}$ the
node $\{i+1\}$ along with $E_{i+1}$ arrive to the tree but with the imposed restriction that it may 
not attach to $\{1\}$ and
have $E_{i+1}>t_1$ at the same time, 
which occurs with probability $e^{-t_1}/i$; this ensures
that $\{i+1\}$ coalesces with $\{1\}$ before time $t_1$ for all $i< b_1$, 
thus creating the block $\{1,\cdots,b_1\}$ up to
time $t_1$. After $\{1\},\cdots, \{b_1\}$ have arrived, 
the node $\{b_1+1\}$ must attach 
to $\{1\}$ and $E_{b_1+1}$ must be greater than $t_2$, which occurs with probability $e^{-t_2}/b_1$; the node
$\{b_1+1\}$ will be the root of a sub-tree formed with the nodes $\{b_1+2\},\cdots,\{b_1+b_2\}$ which
will build the block $\{b_1+1,\cdots,b_1+b_2\}$ at time $t_2$. Thus,
for each
$i\in\{1,\cdots,b_2-1\}$ the node $\{b_1+i+1\}$ must arrive and attach to any of $\{b_1+1\},\cdots,\{b_1+i\},$
which occurs with probability $\frac{i}{b_1+i}$, and, furthermore,
conditional on this event, 
it may not
attach to $\{b_1+1\}$ and have $E_{b_1+i+1}>t_2$ at the same time, which occurs with probability 
$\frac{e^{-t_2}}i$. 
Finally, if $n-b_1-b_2>0$, for $i\in\{0,\cdots,n-b_1-b_2-1\}$ the node $\{b_1+b_2+i+1\}$ must either attach to any of
$\{b_1+b_2+j\}$, $1\leq j \leq i$, or attach to $\{1\}$ or $\{b_1+1\}$ and have
$E_{b_1+b_2+i+1}>t_1$ or $E_{b_1+b_2+i+1}>t_2$ respectively; 
this occurs with probability $\frac{e^{-t_1}+e^{-t_2}+i}{b_1+b_2+i}$.  
Putting all together we obtain
\begin{align*}\hspace{-20pt}
&\P\(B_1\in\Pi^n(t_1),B_2\in\Pi^n(t_2)\)\\
=&\[\prod_{i=1}^{b_1-1}\(1-\frac{e^{-t_1}}{i}\)\]  
  \[\frac{e^{-t_2}}{b_1}\prod_{i=1}^{b_2-1}\(1-\frac{e^{-t_2}}{i}\)\frac{i}{b_1+i}\]
  \[\prod_{i=0}^{n-b_1-b_2-1} \frac{e^{-t_1}+e^{-t_2}+i}{b_1+b_2+i}\]\\
=&\frac{1}{(n-1)!}\frac{\Gamma(b_1-e^{-t_1})}{\Gamma(1-e^{-t_1})}
  e^{-t_2}
  \frac{\Gamma(b_2-e^{-t_2})}{\Gamma(1-e^{-t_2})}
  \frac{\Gamma(n-b_1-b_2+e^{-t_1}+e^{-t_2})}{\Gamma(e^{-t_1}+e^{-t_2})},
\end{align*}
where the last product is set to 1 if $n-b_2-b_1=0$.
On the other hand,
if $t_2<t_1$, by exchangeability we may instead compute 
\begin{equation*}
\P(\{1,\cdots,b_2\}\in\Pi^n(t_2),\{b_2+1,\cdots,b_2+b_1\}\in\Pi^n(t_1))
\end{equation*}
obtaining
\begin{align*}\hspace{-20pt}
&\P\(B_1\in\Pi^n(t_1),B_2\in\Pi^n(t_2)\)\\=&
\frac{1}{(n-1)!}\frac{\Gamma(b_2-e^{-t_2})}{\Gamma(1-e^{-t_2})}
e^{-t_1}
\frac{\Gamma(b_1-e^{-t_1})}{\Gamma(1-e^{-t_1})}
\frac{\Gamma(n-b_2-b_1+e^{-t_2}+e^{-t_1})}{\Gamma(e^{-t_2}+e^{-t_1})}.
\end{align*}\par
ii) Suppose that{ $B_1\subset B_2$}.
Of course if $t_1>t_2$ we have
$\P\(B_1\in\Pi^n(t_1),B_2\in\Pi^n(t_2)\)=0$ whenever $B_1$ is strictly contained in $B_2$. Assuming that $t_1\leq t_2$ and using the same rationale as before we obtain
\begin{align*}
&\P\(B_1\in\Pi^n(t_1),B_2\in\Pi^n(t_2)\)\\
=&\[\prod_{i=1}^{b_1-1}\frac{i-e^{-t_1}}{i}\]
 \[\prod_{i=0}^{b_2-b_1-1}\frac{i+e^{-t_1}-e^{-t_2}}{b_1+i}\]
 \[\prod_{i=0}^{n-b_2-1}\frac{e^{-t_2}+i}{b_2+i}\]\\
=&\frac{1}{(n-1)!}
  \frac{\Gamma(b_1-e^{-t_1})}{\Gamma(1-e^{-t_1})}
  \frac{\Gamma(b_2-b_1+e^{-t_1}-e^{-t_2})}{\Gamma(e^{-t_1}-e^{-t_2})}
  \frac{\Gamma(n-b_2+e^{-t_2})}{\Gamma(e^{-t_2})},
\end{align*}
where the product in the middle is set to $1$ if $B_1=B_2$.\par
iii) If {$B_1\cap B_2\neq \emptyset$ and $B_1\not\subset B_2$}, 
we clearly have 
$\P\(B_1\in\Pi^n(t_1),B_2\in\Pi^n(t_2)\)=0$. \par
From the previous computations, and summing over the corresponding cases, we see that 
if $b_1+b_2\leq n$ then, changing the variable $p=e^{-t}$, the integral in $\eqref{eq:Elb1lb2_3}$ is given by
\begin{align*}
\E\[\ell_{n,b_1}\ell_{n,b_2}\]
=&\frac{n}{b_1!b_2!(n-b_1-b_2)!}\\
&\int_0^1\int_0^1
\frac{\Gamma(b_1-p_1)}{\Gamma(1-p_1)}
\frac{\Gamma(b_2-p_2)}{\Gamma(1-p_2)}
\frac{\Gamma(n-b_1-b_2+p_1+ p_2)}{\Gamma(p_1 + p_2)}
\frac{\dif{p_1}\dif{p_2}}{p_1\vee p_2}\\+
&\frac{n}{b_1!(b_2-b_1)!(n-b_2)!}\\
&\int_0^1\int_{0}^{p_1}
\frac{\Gamma(b_1-p_1)}{\Gamma(1-p_1)}
 \frac{\Gamma(b_2-b_1+p_1-p_2)}{\Gamma(p_1-p_2)}
 \frac{\Gamma(n-b_2+p_2)}{\Gamma(p_2+1)}
\frac{\dif{p_2}\dif{p_1}}{p_1}
\end{align*}
whereas if $b_1+b_2>n$ the first summand in the above expression is set to zero. Rearranging terms
we obtain \eqref{eq:Elb1lb2}. %and \eqref{eq:Elb1lb2_2}.\par
\end{proof}
%-------------------------

%------------------------- Proof of Asymptotic Behavior of L1
\begin{proof}[Proof of Lemma $\ref{le:ASMPL1L2L3}$ (asymptotics for $L_1$)]
Again, we have from Stirling's formula that $\Gamma(m+c)/\Gamma(m+d)= m^{c-d}(1+\bO{1/m})$ 
for any real numbers $c$ and $d$, where the $\bO{1/m}$ term holds uniformly for $0\leq c,d\leq 1$. 
Letting $m=b-1$ and $n-b$ leads to the following equality:
\begin{align*}\frac{n}{b(n-b)}&\frac{\Gamma(n-b+p)}{\Gamma(n-b)}
\frac{\Gamma(b-p)}{\Gamma(b)}\\&=\frac{n}{b(n-b)}(n-b)^{p}(b-1)^{-p}\(1+\bO{\frac{1}{b}}+\bO{\frac{1}{n-b}}\).
\end{align*}
Thus, using Euler's reflection formula to write $\Gamma(1-p)\Gamma(1+p)$ as $\pi p/\sin{\(\pi p\)}$ in the definition of $L_1$, we get
\begin{align*}
L_1(n,b)=&\(1+\bO{\frac{1}{b}}+\bO{\frac{1}{n-b}}\)\frac{1}{b(n-b)}
\int_0^1 \frac{\sin{\(\pi p\)}}{\pi p} \(\frac{n-b}{b-1}\)^p\dif{p}\\
=&\(1+\bO{\frac{1}{b}}+\bO{\frac{1}{n-b}}\)\frac{b-1}{b (n-1)^2}f_1\(\frac{b-1}{n-1}\)
\end{align*}
Thus, for every $\epsilon>0$ there is a $b_0\in\N$ such that for large enough $n\in\N$ we have
\begin{equation}\label{eq:maxAsmpL1}
\max_{b_0\leq b \leq n-b_0}\abs{\frac{n^2L_1(n,b)}{f_1\(\frac{b-1}{n-1}\)} -  \frac{b-1}{b}}< \epsilon.
\end{equation}
It remains to study the approximation as $n\to\infty$ in the cases where $n-b$ or $b$ remain constant. In the first
case, when $n-b=c$, we have $b\to\infty$ as $n\to\infty$ and, by Stirling's approximation and dominated convergence and substituting 
$p = y/ \log b$ on the one hand

\begin{align*}\hspace{-30pt}
L_1(n,b)
%&\int_0^1 \frac{\sin{\(\pi p\)}}{\pi p} \frac{\Gamma(b-p)}{\Gamma(b)} 
%\frac{\Gamma(c+p)}{\Gamma(c)} \dif{p}\\
\sim&
\int_0^1 \frac{\sin{\(\pi p\)}}{\pi p} b^{-p-1} \frac{\Gamma(c+p)}{\Gamma(c+1)}\dif{p}\\
=& \frac{1}{bc}\int_0^{\log b} \frac{\sin{\(\pi y / \log b\)}}{\pi y /\log b} e^{-y} 
\frac{\Gamma(c+y/\log b)}{\Gamma(c)}\frac{\dif{y}}{\log b}\\
\sim&\frac{1}{bc\log b} \int_0^\infty e^{-y} \dif{y}.
\end{align*}
and on the other hand because of $b \to \infty$
\begin{align*}
\frac{1}{n^2}f_1\(\frac{b-1}{n-1}\) \sim& \frac{1}{bc} \int_0^1 \frac{\sin(\pi p)}{\pi p} b^{-p} c^p \dif{p}\\
=& \frac{1}{bc} \int_0^{\log b} \frac{\sin\(\pi y/\log b\)}{\pi y/\log b} e^{-y} c^{y/\log b} \frac{\dif{y}}{\log b} \\
\sim& \frac{1}{bc\log b} \int_0^\infty e^{-y} \dif{y}.
\end{align*}
Thus $L_1(n,b)\sim n^{-2}f_1((b-1)/(n-1))$ which extends $\eqref{eq:maxAsmpL1}$ for $b>n-b_0$.\par

Similarly for the second case, if $b\geq 2$ is fixed, we have $n-b\to\infty$ as $n\to\infty$. Thus,
 with $1-p=y/\log n$
\begin{align*}
%\hspace{-100pt}
L_1(n,b)
\sim&
\int_0^1 \frac{\sin{\(\pi p\)}}{\pi p} \frac{\Gamma(b-p)}{\Gamma(b+1)} n^{p-1} \dif{p}\\
=&
\frac{1}{\log^2(n)}\int_0^{\log n} 
\frac{\sin\(\pi - \pi y /\log n\)}{(1-y/\log n)\pi y / \log n }
\frac{\Gamma\(b-1+\frac{y}{\log n}\)}{\Gamma(b+1)}ye^{-y}\dif{y} \\
\sim&
\frac{1}{b(b-1)\log^2 n} \int_0^\infty ye^{-y} \dif{y}
\end{align*}
and
\begin{align}%\hspace{-100pt}
\frac{1}{n^2}f_1&\(\frac{b-1}{n-1}\) \sim \frac{1}{(b-1)^2} \int_0^1 \frac{\sin \pi p}{\pi p} (b-1)^{1-p} n^{p-1}\dif{p} \nonumber\\
=& \frac{1}{(b-1)^2\log^2 n} \int_0^{\log n} \frac{\sin\(\pi - \pi y / \log n\)}{\(1-y/\log n\)\pi y /\log n}
(b-1)^{y/\log n} ye^{-y} \dif{y} \nonumber\\
\sim& \frac{1}{(b-1)^2\log^2 n}  \int_0^\infty  ye^{-y}\dif{y}. \label{eq:PASMPL1bfixed}
\end{align}
Thus $L_1(n,b)\sim (b-1)n^{-2}f_1((b-1)/(n-1))/b$, which extends $\eqref{eq:maxAsmpL1}$ for $b<b_0$. This  extends $\eqref{eq:maxAsmpL1}$ for $b<b_0$. Thus we proved $\eqref{eq:ASMPL1}$.

For the proof of $\eqref{eq:ASM_EV_SFS1}$, we substitute
$b$ by 1 and perform similar computations:
\begin{align*}
L_1(n,1)&= \int_0^1  \frac{\Gamma(1-p)}{\Gamma(2)} \frac{\Gamma(n-1+p)}{\Gamma(n)} \frac{\dif{p}}{\Gamma(1-p)\Gamma(1+p)}\\
&\sim \int_0^1 n^{p-1} \frac{\dif{p}}{\Gamma(1+p)}\\
&= \int_0^{\log n} e^{-y} \frac{\dif{y}}{(\log n) \Gamma(2-y/\log n)}\\
&\sim\frac{1}{\log n}\int_0^\infty e^{-y} \dif{y},
\end{align*}
and from $\eqref{eq:PASMPL1bfixed}$ with choosing $b=2$
\begin{equation*}
\frac{1}{n^2}f_1\(\frac{1}{n-1}\) \sim \frac{1}{\log^2 n } \int_0^\infty ye^{-y} \dif{y}.
\end{equation*}
This proves $\eqref{eq:ASM_EV_SFS1}$.
\end{proof}
%-------------------------

%------------------------- Proof of Asymptotic Behavior of L2 and L3
\begin{proof}[Proof of Lemma $\ref{le:ASMPL1L2L3}$ (asymptotics for $L_2$ and $L_3$) ]
The arguments here are similar to the arguments in the proof of the asymptotics for $L_1$, 
but we avoid repeating similar and tedious computations. We only layout the first steps of the proof.
By Stirling's approximation applied to the integrands appearing in
$L_2$ and $L_3$, we obtain, for $b_2-b_1>0$,
\begin{align*}
&\frac{\Gamma(b_1-p_1)}{\Gamma(b_1+1)}
\frac{\Gamma(b_2-b_1+p_1-p_2)}{\Gamma(b_2-b_1+1)}
\frac{\Gamma(n-b_2+p_2)}{\Gamma(n-b_2+1)}=\\
&\frac{1}{(n-1)^3}\(\frac{b_1-1}{n-1}\)^{-p_1-1}
\(\frac{b_2-b_1}{n-1}\)^{p_1-p_2-1}
\(\frac{n-b_2}{n-1}\)^{p_2-1}\times \\
&\(1+\bO{\frac{1}{b_1}}+\bO{\frac{1}{b_2-b_1}}+\bO{\frac{1}{n-b_2}}\),
\end{align*}
and, for $n-b_2-b_1>0$,
\begin{align*}
\hspace{-20in}
&\frac{\Gamma(b_1-p_1)}{\Gamma(b_1+1)}
 \frac{\Gamma(b_2-p_2)}{\Gamma(b_2+1)}
 \frac{\Gamma(n-b_1-b_2+p_1+p_2)}{\Gamma(n-b_1-b_2+1)}=\\
&\frac{1}{(n-2)^3}\(\frac{b_1-1}{n-2}\)^{-p_1-1}
\(\frac{b_2-1}{n-2}\)^{-p_2-1}\(1-\frac{b_1+b_2}{n-2}\)^{p_1+p_2-1} \times \\
&\(1+\bO{\frac{1}{b_1}}+\bO{\frac{1}{b_2}}+\bO{\frac{1}{n-b_1-b_2}}\);
\end{align*}
thus
\begin{equation*}
L_2(n,b_1,b_2)=\frac{1}{(n-1)^3}f_2\(\frac{b_1-1}{n-1},\frac{b_2-1}{n-1}\)\(1+\bO{\frac{1}{b_1}}+\bO{\frac{1}{b_2-b_1}}+
\bO{\frac{1}{n-b_2}}\),
\end{equation*}
and
\begin{equation*}
L_3(n,b_1,b_2)=\frac{1}{(n-2)^3}f_3\(\frac{b_1-1}{n-2},\frac{b_2-1}{n-2}\)\(1+\bO{\frac{1}{b_1}}+\bO{\frac{1}{b_2}}+
\bO{\frac{1}{n-b_1-b_2}}\).
\end{equation*}
Similar to the analysis in the proof of $\eqref{eq:ASMPL1}$, to obtain \eqref{eq:ASMPL2} 
it remains to study the cases where at least one
of $b_1,b_2-b_1,$ or $n-b_2$ remains constant, whereas for $\eqref{eq:ASMPL3}$ the cases of interest are
where one of $b_1,b_2,$ or $n-b_2-b_1$ remain constant.  
\end{proof}
%--------------------------Proof of Theorem on asymptotics E[SFS]
\begin{proof}[Proof of Theorem \ref{cor:ASMEV_SFSI}]
We first derive the asymptotic  behavior of the function $f_1$.  We have 
\begin{align}
f_1(u) \sim \frac 1{u^2\log^2 u}  \quad \text{ as } u\downarrow 0.
\label{f1at0}
\end{align}
For the proof note that for $u<1/2$ we have $(1-u)^{p-1} \le 2$. Therefore dominated convergence implies for $u\downarrow 0$  
\begin{align*}
f_1(u) &= \frac 1{u^2}\int_0^1 u^{1-p}(1-u)^{p-1} \frac { \dif p}{\Gamma(1-p)\Gamma(1+p)}
\\&= \frac 1{u^2} \int_0^1 e^{-(p-1)\log u }(1-u)^{p-1}(1-p) \frac { \dif p}{\Gamma(2-p)\Gamma(1+p)}
\\& = \frac 1{u^2}\int_0^{-\log u} e^{-y} (1-u)^{y/\log \frac 1u} \frac y{\log \frac 1u} \cdot\frac {\dif y}{\log\frac 1 u \Gamma(1-\frac y{\log u})\Gamma(2+\frac y{\log u})}
\\& \sim \frac 1{u^2\log^2 u} \int_0^\infty ye^{-y} \dif y 
\end{align*}
implying \eqref{f1at0}. Also
\begin{align}
f_1(u) \sim -\frac 1{(1-u)\log (1-u)}  \quad \text{ as } u\uparrow 1,
\label{f1at1}
\end{align}
which we obtain again by means of dominated convergence in the limit $u \uparrow 1$ as follows:
\begin{align*}
f_1(u) &=\frac 1{u(1-u)} \int_0^1 e^{p\log (1-u)} u^{-p} \frac { \dif p}{\Gamma(1-p)\Gamma(1+p)}
\\& =\frac 1{u(1-u)} \int_0^{-\log(1-u)} \frac{e^{-y} u^{y/\log(1-u)} \dif y}{(-\log(1-u))\Gamma(1+\frac y{\log(1-u)})\Gamma(1-\frac y{\log(1-u)})}
\\& \sim- \frac 1{(1-u)\log(1-u) }\int_0^\infty e^{-y} \dif y.
\end{align*}

These asymptotics together with Lemma \ref{le:ASMPL1L2L3} imply our claims. Without loss of generality let $\theta=1$. From \eqref{eq:ASM_EV_SFS1} we obtain
\begin{equation*}
\E\[ SFS_{n,1}\]=  nL_1(n,1) \sim \frac{1}n f_1\(\frac 1{n-1}\) \sim \frac{\log n}n \frac{(n-1)^2}{\log^2(n-1)} 
\end{equation*}
which yields claim (i). 

Similary from \eqref{eq:ASMPL1} we get for $b\ge 2$ and $b/n\to 0$
\begin{equation*}
\E\[ SFS_{n,b}\]=  nL_1(n,b) \sim \frac{b-1}{nb}   f_1\(\frac {b-1}{n-1}\) \sim \frac{b-1}{nb}  \frac{(n-1)^2}{(b-1)^2\log^2\frac{b-1}{n-1}}
\end{equation*}
which in view of $b/n \to 0$ yields assertion (ii).

Claim (iii) is an immediate consequence of formula \eqref{eq:ASMPL1}, since here we have $(b-1)/b\to 1$.

Next under the condition $(n-b)/n\to 0$ we get from \eqref{eq:ASMPL1} and \eqref{f1at1}
\begin{align*}
\E\[ SFS_{n,b}\] \sim \frac{b-1}{nb}   f_1\(\frac {b-1}{n-1}\) \sim- \frac {b-1}{nb} \frac {n-1}{(n-b)\log \frac{n-b}{n-1}}\sim \frac 1{(n-b) \log \frac n{n-b}}
\end{align*}
which confirms assertion (iv).

Finally, we have from \eqref{eq:ASMPL1}
\begin{equation*} 
\E\[ SFS_{n,I}\] \sim \frac{1}{n} \sum_{\frac bn \in I} f_1\(\frac bn \) \sim \int_I f_1(u) \dif u,
\end{equation*}
which is claim (v). This finishes the proof.
\end{proof}

%------------------------- Proof of Asymptotics for Cov
\begin{proof}[Proof of Theorem $\ref{th:AsmpCov}$]
The approximations follow from the asymptotics for $L_1,L_2,$ and $L_3$ substituted in the covariance formula given in Corollary $\ref{cor:SFS_EvCov}$.
%We leave such computations to Appendix 1 where a table with the corresponding asymptotics for each of the cases is provided.
\end{proof}

%------------------------- Proof of Weak Law and L_2 convergence of SFS for b=o(sqrt(n)/log n)
\begin{proof}[Proof of Corollary \ref{lawlargenumbers}]
We have to prove that
\begin{equation*} \mathbb Var (SFS_{n,b})= o\big( \mathbb E[SFS_{n,b}]^2\big).
\end{equation*}
From the monotonicity properties of the gamma function
we have for $1\le b \leq n-1$
\begin{align}
L_2(n,b,b) &= \int_0^1\int_0^{p_1} \frac{\Gamma(b-p_1)}{\Gamma(b+1)}\frac{\Gamma(n-b+p_2)}{\Gamma(n-b+1)} \frac{\dif p_2 \dif p_1}{p_1 \Gamma(1-p_1)\Gamma(p_2+1)}\notag \\
&\leq \int_0^1 \frac{\Gamma(b-p_1)}{\Gamma(b+1)}\frac{\Gamma(n-b+p_1)}{\Gamma(n-b+1)} \frac{1}{\Gamma(1-p_1)p_1}
      \int_0^{p_1} \frac{\Gamma(1+p_1)}{\Gamma(1+p_1)\Gamma(1+p_2)} \dif p_2 \dif p_1\notag \\
&\le \sup_{1\le x\le y  \le 2} \frac {\Gamma(y)}{\Gamma(x)} \int_0^1 \frac{\Gamma(b-p_1)}{\Gamma(b+1)}\frac{\Gamma(n-b+p_1)}{\Gamma(n-b+1)} \frac{ \dif p_1}{ \Gamma(1-p_1)\Gamma(p_1+1)}\notag \\
&=\sup_{1\le x\le y  \le 2} \frac {\Gamma(y)}{\Gamma(x)} L_1(n,b) .
\label{lawlargenumbers1}
\end{align}
Concerning $L_3(n,b,b)$ we have for $b=o(n)$ by Stirling's approximation uniformly in $0 \le p_1 , p_2 \le 1$ 
\begin{align*}
\frac{\Gamma(n-2b+p_1+p_2)}{\Gamma(n-2b+1)} \sim n\frac{\Gamma(n-b+p_1)}{\Gamma (n-b+1)}\frac{\Gamma(n-b+p_2)}{\Gamma (n-b+1)},
\end{align*}
hence,  
with $1<\eta <2$
\begin{align}
\iint\limits_{\overset{\scriptstyle 0\le p_1,p_2\le   1}{ \eta< p_1+p_2 \le 2}} &\frac{\Gamma(b-p_1)}{\Gamma(b+1)} \frac{\Gamma(b-p_2)}{\Gamma(b+1)}\frac{\Gamma(n-2b+p_1+p_2)}{\Gamma(n-2b+1)}\notag \\
&\qquad \times \frac{\dif p_2 \dif  p_1}{\Gamma(1-p_1)\Gamma(1-p_2) (p_1\vee p_2)\Gamma (p_1+p_2)}\notag
\\&\sim  n \iint\limits_{\overset{\scriptstyle 0\le p_1,p_2\le   1}{ \eta< p_1+p_2 \le 2}}^{{\color{white} |}}  \frac{\Gamma(b-p_1)}{\Gamma(b+1)} \frac{\Gamma(b-p_2)}{\Gamma(b+1)}\frac{\Gamma(n-b+p_1)}{\Gamma (n-b+1)}\frac{\Gamma(n-b+p_2)}{\Gamma (n-b+1)}\notag
\\ &\qquad \times \frac{\dif p_2 \dif  p_1}{\Gamma(1-p_1)\Gamma(1-p_2) (p_1\vee p_2)\Gamma (p_1+p_2)}\notag
\\ &\le \frac n{\eta-1}\sup_{\eta \le x \le 2}\frac 1{\Gamma (x)} \int_0^1\int_0^{1{\color{white} \big|}} \frac{\Gamma(b-p_1)}{\Gamma(b+1)} \frac{\Gamma(b-p_2)}{\Gamma(b+1)}\frac{\Gamma(n-b+p_1)}{\Gamma (n-b+1)}\notag
\\ &\qquad \times \frac{\Gamma(n-b+p_2)}{\Gamma (n-b+1)}\frac{\dif p_2 \dif  p_1}{\Gamma(1-p_1)\Gamma(1-p_2) \Gamma(1+p_1)\Gamma(1+p_2)} \notag
\\&= \frac n{\eta-1}\sup_{\eta \le x \le 2}\frac 1{\Gamma (x)} L_1(n,b)^2 .
\label{lawlargenumbers2}
\end{align}
Also, by another application of Stirling's approximation and for $b =o(n)$
\begin{align}
\iint\limits_{\overset{\scriptstyle 0\le p_1,p_2\le   1}{ 0< p_1+p_2 \le \eta}} &\frac{\Gamma(b-p_1)}{\Gamma(b+1)} \frac{\Gamma(b-p_2)}{\Gamma(b+1)}\frac{\Gamma(n-2b+p_1+p_2)}{\Gamma(n-2b+1)}\notag\\
&\qquad \times \frac{\dif p_2 \dif  p_1}{\Gamma(1-p_1)\Gamma(1-p_2) (p_1\vee p_2)\Gamma (p_1+p_2)} \notag
\\& =O\Big( \iint\limits_{\overset{\scriptstyle 0\le p_1,p_2\le   1}{ 0< p_1+p_2 \le \eta}} b^{-p_1-p_2-2}(n-2b)^{p_1+p_2-1}\notag
\\
&\qquad \times \frac{\dif p_2 \dif  p_1}{\Gamma(1-p_1)\Gamma(1-p_2) (p_1\vee p_2)\Gamma (p_1+p_2)}\Big)\notag
\\& =O\Big( b^{-\eta-2}(n-2b)^{\eta-1}\iint\limits_{0\le p_1p_2 \le 1}^{{\color{white}|}} \frac{\dif p_2 \dif  p_1}{\Gamma(1-p_1)\Gamma(1-p_2) (p_1\vee p_2)\Gamma (p_1+p_2)}\Big)\notag
\\& = o\Big( \frac n{b^4 \log^4 n}\Big)
\label{lawlargenumbers3}
\end{align}
Combining \eqref{lawlargenumbers2} and \eqref{lawlargenumbers3} with Theorem \ref{cor:ASMEV_SFSI} (i) and (ii) and letting $\eta \to 2$ we obtain
\begin{equation*} L_3(n,b,b) = n L_1(n,b)^2(1+o(1)) + o( n^{-1} \mathbb E[SFS_{n,b}]^2) .
\end{equation*}
Using this estimate together with \eqref{lawlargenumbers1} and with Theorem \ref{th:Elnb}, Corollary \ref{cor:SFS_EvCov} yields
\begin{equation*}
\mathbb Var (SFS_{n,b}) = O( \mathbb E[SFS_{n,b}])+ o(\mathbb E[SFS_{n,b}]^2 )
\end{equation*}
Because of our assumption $\mathbb E[SFS_{n,b}]\to \infty$ our claim is proved.
\end{proof}

%----------------------------------------------------------------------%
%------------------------- Proofs Distribution ------------------------%
%----------------------------------------------------------------------%

\section{Proofs of Section \ref{sec:BLdistribution}} \label{sec:proofsBLdistribution}
%------------------------- Proof of Distribution Function of lnb
\begin{proof}[Proof of Theorem $\ref{Prop:Llnb}$]
Note that since $b> n/2 $, and by the exchangeability of $\Pi^n$, we have:
\begin{equation*}
\P(\ell_{n,b}>s)=\binom{n}{b} \P\(\L{}(\{t:\{1,\cdots,b\}\in\Pi^n(t)\})>s\),
\end{equation*}
where $\L{}$ is the Lebesgue measure, and $\L{}(\{t:\{1,\cdots,b\}\in\Pi^n(t)\})$ gives the time
that the block $\{1,\cdots,b\}$ exists in the Bolthausen-Sznitman coalescent starting with $n$ individuals.\par

We now describe the event $\{\L{}(\{t:\{1,\cdots,b\}\}\in\Pi^n(t))>s\}$ 
in terms of the RRT construction of the Bolthausen-Sznitman coalescent. Let $\mathcal{G}$ be the event that
the nodes $\{1\},\{2\},\cdots,\{b\}$ and $\{1\},\{b+1\},\cdots,\{n\}$ form two sub-trees,
say $T_1$ and $T_2$ rooted at $\{1\}$; i.e.
\begin{align*}
\mathcal{G}\coloneqq&\{T\colon \{j\} \text{ does not attach to } \{i\} \text{, for all } 2\leq i \leq b \text{ and } b < j \leq n\}.
\end{align*}
Then
\begin{equation*}
\L{}(\{t:\{1,\cdots,b\}\}\in\Pi^n(t))=
\begin{cases*}
0 & if $T\not\in \mathcal{G}$ \\
\(m\(T_2\)-M\(T_1\)\)\vee0 & if $T \in \mathcal{G}$.
\end{cases*}
\end{equation*}
Indeed, observe that by the cutting-merge procedure $T\not\in\mathcal{G}$ if and only if any block of 
$\Pi^n$ that contains all of $\{1,\cdots,b\}$ also contains some $j\in \{b+1,\cdots, n\}$.
 On the other hand, on the event
$\{T\in \mathcal{G}\}$, the random variable $M(T_1)$ is just the time at which the block $\{1,\cdots,b\}$ 
appears in $\Pi^n$, while
$m(T_2)$ is the time at which it coalesces with some other block in $T_2$.
Furthermore, observe that conditioned on $\{T\in\mathcal{G}\}$, $T_1$ and $T_2$ are two independent
RRTs of sizes $b$ and $n-b+1$ respectively. Thus, by Lemma $\ref{le:lawsmTMT}$ we have
\begin{align*}\hspace{-15pt}
&\P(\ell_{n,b}>s)\\
&=\binom{n}{b}\P(T\in\mathcal{G})\P(m(T_2)-M(T_1)>s)\\
&=\binom{n}{b}\prod_{i=0}^{n-b-1}\left(\frac{1+i}{b+i}\right)\frac{1}{(b-1)(n-b)}
\int_0^1 \frac{\Psi(b-p)-\Psi(1-p)}{Be(n-b,e^{-s}p)Be(b-1,1-p)} \dif{p} \\
&=\frac{n}{(n-b)b(b-1)}
\int_0^1 \frac{\Psi(b-p)-\Psi(1-p)}{Be(n-b,e^{-s}p)Be(b-1,1-p)} \dif{p}.
\end{align*}
\end{proof}

%------------------------- Proof of Convergence in Distribution of lnb (conditioned on lnb>0)
\begin{proof}[Proof of Corollary $\ref{cor:ConvDistlnb}$]
Observe that, uniformly for $p\in(0,1)$, we have
\begin{equation*}
\Psi(b-p)-\Psi(1-p)=\sum_{k=1}^{b-1}\frac{1}{k-p}=\frac{1}{1-p} + \log b + \bO{1},
\end{equation*}
thus, substituting in \eqref{eq:Llnb} and also using Stirling's approximation and Euler's reflection formula, we obtain
\begin{align*}
\P(\ell_{n,b}>0)\sim& \frac{1}{u(1-u)n}
\int_0^1 \(\frac{1-u}{u}\)^p \frac{\sin \pi p}{\pi} 
\(\frac{1}{1-p}+\log n + \bO{1}\) \dif{p}\\
\sim&\frac{\log n}{u(1-u)n}\int_0^1e^{p\alpha} \frac{\sin \pi p}{\pi}\dif{p}\\
=&\frac{\log n}{u(1-u)n}G(\alpha).
%=&\frac{\log n}{n}\frac{1}{s^2(1-s)\(\pi^2+\(\log \frac{1-s}{s}\)^2\)}.
\end{align*}
On the other hand, for any $s>0$ we have
\begin{align*}
\P\(\(\log n\)\ell_{n,b}>s\)\sim& \frac{1}{u(1-u)n}
\int_0^1 \frac{b^{-p}\(n-b\)^{pe^{-s/\log n}}}{\Gamma(1-p)\Gamma(pe^{-s/\log n})}\(\frac{1}{1-p} + \log b + \bO{1}\)\dif{p}\\
\sim&\frac{\log n}{u(1-u)n}\int_0^1 e^{p\alpha}
(n-b)^{p(e^{-s/\log n}-1)}
\frac{1}{\Gamma(1-p)\Gamma(p)}\dif{p}\\
\sim&\frac{\log n}{u(1-u)n}\int_0^1 e^{p\alpha}
(n-b)^{-ps/\log n}
\frac{1}{\Gamma(1-p)\Gamma(p)}\dif{p}\\
\sim&\frac{\log n}{u(1-u)n}\int_0^1 e^{p(\alpha-s)}
\frac{\sin \pi p}{\pi }\dif{p}\\
=&\frac{\log n}{u(1-u)n}G(\alpha-s).
\end{align*}
\end{proof}
%-------------------------

%------------------------- Proof of Joint Distribution Function of lnb
\begin{proof}[Proof of Theorem $\ref{le:JointLlnb}$]
Letting $\ell_\pi\coloneqq \L{}\(t:\pi\in\Pi^n(t)\)$ for any subset $\pi\subset [n]$, by exchangeability of $\Pi^n(t)$ we have
\begin{equation*}
\P\(\Lambda_{\mathbf{b},\mathbf{s}}\)=\frac{n!}{b_1!(b_2-b_1)!\cdots(n-b_m)!}
\P\(\bigcap_{1\leq i \leq m}A_{b_i,s_i} , 
\bigcap_{\substack{b>b_1\\b\not\in\mathbf{b}}}\bar A_{b,0} \)
\end{equation*}
where
$$A_{b,s}=\{\ell_{\{1,\dots,b\}}>s\}$$
and
$$\bar A_{b,0}=\{\ell_{\{1,\dots,b\}}=0\}.$$
Recall that $M\(\restr{T}{b_1}\)$ is defined as the maximum of the exponential edges associated to the root of $\restr{T}{b_1}$. Letting $b_{m+1}\coloneqq n$, and also letting $E_b$, $1\leq b \leq n$, be the exponential variable associated to $b$, we have
\begin{align*}
&\hspace{10pt}\P\(\bigcap_{1\leq i \leq m}A_{b_i,s_i} , 
\bigcap_{\substack{b>b_1\\b\not\in\mathbf{b}}}\bar A_{b,0} \)\\
&=\(\prod_{i=1}^{m+1} \frac{1\cdot 2 \cdots (b_{i+1}-b_{i})}{b_i(b_i+1)\cdots(b_{i+1}-1)}\)
\P\(E_{b_1+1}-M\(\restr{T}{b_1}\)>s_1, \bigcap_{i=2}^m
E_{b_i+1}-E_{b_{i-1}+1}>s_i\),
\end{align*}
where the product above is the probability that $T$ is structured in such a way that $\{b_1+1\}$ attaches to $\{1\}$ and is the root 
of a subtree formed with $\{b_1+1,\dots,b_2\}$, that $\{b_2+1\}$ attaches to $\{1\}$ and is the root of a subtree
formed with $\{b_2+1,\dots,b_3\}$, and so forth. Using the independence of the exponential variables we obtain
\begin{align*}
&\hspace{10pt}\P\(E_{b_1+1}-M\(\restr{T}{b_1}\)>s_1, \bigcap_{i=2}^m \{E_{b_i+1}-E_{b_{i-1}+1}>s_i\} \)\\
&=\int_0^\infty \dif{t_1} \int_{t_1+s_1}^\infty \dif{t_2} \dots \int_{t_m+s_m}^\infty \dif{t_{m+1}} 
\(\der{\P\(M(\restr{T}{b_1})\leq t_1\)}{t_1}\) e^{-t_2}\dots e^{-t_{m+1}}\\
&=\int_0^\infty \dif{t_1} \int_{t_1+s_1}^\infty \dif{t_2} \dots \int_{t_{m-1}+s_{m-1}}^\infty \dif{t_m}
\(\der{\P\(M(\restr{T}{b_1})\leq t_1\)}{t_1}\) e^{-t_2}\dots e^{-2t_{m}} e^{-s_m}\\
&\vdots\\
&=\frac{\exp\{-\IP{(m:1)}{\mathbf{s}}\}}{m!}
\int_0^\infty e^{-mt_1}
\der{\P\(M(\restr{T}{b_1})\leq t\)}{t_1}\dif{t_1}.
\end{align*}
From $\eqref{eq:lawMT}$ and making $p=e^{-t}$ in the above integral, and putting all together we obtain $\eqref{eq:JointLlnb}$.
Finally $\eqref{eq:JointLlnb2}$ follows from
$$\P\(\Lambda_{\mathbf{b},\mathbf{s}},\ell_{n,b_1-1}=0\)=
\P\(\Lambda_{\mathbf{b},\mathbf{s}}\)-\P\(\Lambda_{\mathbf{b},\mathbf{s}},\ell_{n,b_1-1}>0\)$$
and, recursively,
\begin{align*}
\P\(\Lambda_{\mathbf{b},\mathbf{s}},\bigcap_{n/2<b<b_1} \{\ell_{n,b}=0\}\)=
\P\(\Lambda_{\mathbf{b},\mathbf{s}}\)-\sum_{n/2<b<b_1}\P\(\Lambda_{\mathbf{b},\mathbf{s}},\ell_{n,b}>0,\bigcap_{i=1}^{b_1-b-1}\{\ell_{n,b+i}=0\}\).
\end{align*}
Substituting $\eqref{eq:JointLlnb}$ in the above expression, we obtain $\eqref{eq:JointLlnb2}$.
\end{proof}
%-------------------------

{\bf Aknowledgements}
Alejandro and Arno would like to thank Geronimo Uribe Bravo for his fruitful suggestions on the second moment method.

%----------------------------------------------------------------------%
%------------------------- Bibliografia -------------------------------%
%----------------------------------------------------------------------%

\end{document}